\newcommand{\Conj}{\mathrm{Conj}}
\newcommand{\Prob}{\mathbb{P}}
\newcommand{\gap}{\mathrm{gap}}
\DeclareMathOperator{\Unif}{Unif}
\newcommand{\E}{\mathbb{E}}
\newcommand{\N}{\mathbb{N}}
\newcommand{\Var}{\operatorname{Var}}
\newcommand{\PF}{\mathrm{PF}}
\newcommand{\ind}{\mathbf 1}
\newcommand{\supp}{\operatorname{supp}}
\DeclareMathOperator{\Spec}{Spec}
\DeclareMathOperator{\mix}{mix}
\DeclareMathOperator{\Fix}{Fix}
\DeclareMathOperator{\TV}{TV}
\DeclareMathOperator{\rank}{rank}
\DeclareMathOperator{\nullity}{null}
\theoremstyle{plain}
\newtheorem{theorem}{Theorem}[section]
\newtheorem{lemma}[theorem]{Lemma}
\newtheorem{proposition}[theorem]{Proposition}
\newtheorem{corollary}[theorem]{Corollary}
\theoremstyle{definition}
\newtheorem{definition}[theorem]{Definition}
\newtheorem{example}[theorem]{Example}
\newtheorem{remark}[theorem]{Remark}
\title{The dual Burnside process}
\author{Ivan Z. Feng}
\address{Department of Mathematics, University of Southern California, Los Angeles, CA 90089-2532, USA}
\email{ifeng@usc.edu}
\urladdr{https://dornsife.usc.edu/ivan/}
\date{\today}
\begin{document}

\begin{abstract}
The Burnside process is a classical Markov chain for sampling uniformly from group orbits. We introduce the \emph{dual Burnside process}, obtained by interchanging the roles of group elements and states. This dual chain has stationary law $\pi(g)\propto |X_g|$, is reversible, and admits a matrix factorization $Q=AB$, $K=BA$ with the classical Burnside kernel $K$. As a consequence the two chains share all nonzero eigenvalues and have mixing times that differ by at most one step. We further establish universal Doeblin floors, orbit- and conjugacy-class lumpings, exact stabilizer/fixed-set quotient pairs, and transfer principles between \(Q\) and \(K\). We analyze the explicit examples of the value-permutation model \(S_k\curvearrowright [k]^n\) and the coordinate-permutation model \(S_n\curvearrowright[k]^n\). In the value-permutation model, for fixed \(k\ge3\), the dual fixed-symbol-set quotient has \(2^k-k-1\) states, independent of \(n\), preserves the full nonzero spectrum, and has limiting nontrivial spectral radius \(1/2\). These results show that the dual chain provides both a conceptual mirror to the classical Burnside process and a genuinely useful compression mechanism for symmetry-aware Markov chain Monte Carlo.
\end{abstract}

\maketitle
\tableofcontents

\section{Introduction}\label{sec:intro}

\subsection{Sampling Up to Symmetry}

Sampling combinatorial objects with large symmetry groups is a recurring theme in probability, combinatorics, and theoretical computer science. If a finite group \(G\) acts on a finite set \(X\), write \(X/G\) for the set of \(G\)-orbits on \(X\), and set \(z:=|X/G|\). Burnside's lemma\footnote{Often attributed to Cauchy and Frobenius but popularized by Burnside; see the historical discussion in \cite{GoldbergJerrum02}.} says
\[
z=|X/G|=\frac1{|G|}\sum_{g\in G}|X_g|,
\qquad
X_g:=\{x\in X:g\cdot x=x\}.
\]
For \(x\in X\), write
\[
G_x:=\{g\in G:g\cdot x=x\}.
\]
The Burnside process, introduced by Jerrum~\cite{Jerrum93}, samples orbits
uniformly by
\[
x\longmapsto g\sim\Unif(G_x)\longmapsto y\sim\Unif(X_g),
\qquad\text{then move to }y.
\]
Its stationary law is constant on each orbit and gives mass \(1/z\) to each orbit. Thus, once the chain has mixed, projecting the sampled state to its orbit gives an approximately uniform sampler for unlabeled objects. The chain mixes rapidly in many natural settings, including value-permutation models~\cite{Paguyo22}, large sparse contingency-table models~\cite{DittmerThesis,DiaconisHowes25}, and centralizer-abelian (CA) groups\footnote{Here ``centralizer-abelian (CA) groups'' means groups in which the centralizer \(C_G(g)=\{h\in G:hg=gh\}\) is abelian for every noncentral \(g\in G\).}~\cite{Rahmani22}, but it can also be provably slow~\cite{GoldbergJerrum02}.

\subsection{The Dual Perspective}

Reversing the two-stage update yields a Markov chain on
\[
G^*:=\{g\in G:|X_g|>0\}.
\]
Specifically, from \(g\in G^*\), sample
\[
x\sim\Unif(X_g),\qquad h\sim\Unif(G_x),
\]
and move to \(h\). The transition kernel is
\[
Q(g,h)=\frac1{|X_g|}\sum_{x\in X_g\cap X_h}\frac1{|G_x|}.
\]
The matrix \(Q\) has size \(|G^*|\times |G^*|\), whereas the primal matrix \(K\) has size \(|X|\times |X|\). As an example, in \(S_k\curvearrowright[k]^n\), this means \(Q\) lives on at most \(k!\) states while \(K\) lives on \(k^n\) states; for \(k\ge2\), Section~\ref{subsec:fixed-set-and-count-quotients} sharpens this to an exact fixed-symbol-set quotient of size \(2^k-k-1\), independent of \(n\), preserving the entire nonzero spectrum. As seen explicitly in Example~\ref{ex:ternary-dual}, in the concrete case \(S_3\curvearrowright[3]^n\), the primal chain \(K\) has \(3^n\) states; by \eqref{eq:value-orbit-count}, the orbit-lumped chain has
\[
|[3]^n/S_3|=\sum_{r=0}^3S(n,r)=\frac{3^n+3}{6}
\]
states, while the dual chain \(Q\) has only 4 states and its conjugacy quotient \(\bar Q\) has 2 states, independent of \(n\).

With
\[
A(g,x)=\frac{\ind_{\{x\in X_g\}}}{|X_g|},
\qquad
B(x,h)=\frac{\ind_{\{h\in G_x\}}}{|G_x|},
\]
the dual kernel is \(Q=AB\), while the classical Burnside kernel is \(K=BA\). Thus the two chains are linked by an auxiliary-variable factorization. As noted in Remark~\ref{rem:Qgg},
\[
Q(g,g)=Q(g,e)\qquad(g\in G^*),
\]
so diagonal entries are read from the \(e\)-column; together with the potentially much smaller matrix size, this structural simplification can make the dual kernel \(Q\) easier to study than the primal kernel \(K\). To the best of our knowledge, this dual process has not previously been studied systematically.

\subsection{Two Running Examples}

\begin{description}[leftmargin=2em]
\item[Value-permutation model] Here \(G=S_k\) acts on \(X=[k]^n\) by
\[
(g\cdot x)_i=g(x_i).
\]
The orbits are set partitions of \([n]\) into at most \(k\) blocks. Paguyo~\cite{Paguyo22}
proves, for \(k\ge n\),
\[
\sup_{x\in[k]^n}\|K^t(x,\cdot)-\pi_K\|_{\TV}
\le n\left(1-\frac1{2k}\right)^t,
\]
hence \(t_{\mix}(K;\varepsilon)=O(k\log(n/\varepsilon))\).

\item[Coordinate-permutation model] Here \(G=S_n\) acts on \(X=[k]^n\) by
\[
(g\cdot x)_i=x_{g^{-1}(i)}.
\]
The orbits are histograms: for \(x\in[k]^n\), the orbit is determined by the count vector
\[
(m_1(x),\dots,m_k(x)),\qquad m_a(x):=|\{i:x_i=a\}|.
\]
Equivalently, the orbits are weak \(k\)-compositions of \(n\), meaning \(k\)-tuples of nonnegative integers summing to \(n\). Aldous~\cite{AldousFill} gives
\[
\sup_{x\in[k]^n}\|K^t(x,\cdot)-\pi_K\|_{\TV}
\le n\left(1-\frac1k\right)^t.
\]
For fixed \(k\), Diaconis~\cite{Diaconis05} gives \(n\)-independent start-specific bounds from the all-equal start. For \(k=2\) and \(n\ge2\), Diaconis--Zhong~\cite{DiaconisZhong21} prove
\[
\frac14\left(\frac14\right)^t
\le
\|K^t(x_0,\cdot)-\pi_K\|_{\TV}
\le
4\left(\frac14\right)^t
\]
from the all-equal start \(x_0\), and Diaconis--Lin--Ram~\cite{DLR25} give the full spectral description of the unlumped binary chain.
\end{description}

\subsection{Contributions and Organization}

Section~\ref{sec:background} fixes notation and recalls the classical Burnside kernel. The main contributions are organized around three layers.

First, Section~\ref{sec:framework} develops the action-independent theory. We define the dual Burnside chain, prove its stationary law, establish the factorization
\[
Q=AB,\qquad K=BA,
\]
and derive the resulting spectral correspondence, eigenvector transport, equal relaxation times, one-step total-variation transfer, \(\chi^2\)-transfer, minorization bounds, and lumping principles.

A further consequence in this section is the common-dual cover theorem: under a uniform stabilizer-preserving cover, two different actions have the same dual kernel. We include the parking-function cover as an illustration of this mechanism: if \(\PF_n\) denotes the set of parking functions of length \(n\), viewed as a subset of \([n+1]^n\), then Pollak's circular construction shows that the \(S_n\)-action on \(\PF_n\) has the same dual chain as the coordinate-permutation model on \([n+1]^n\). The parking-function Burnside analysis, including the Catalan orbit chain, Dyck-path interpretation, and sampling consequences, is further developed in \cite{FP26}.

Second, Section~\ref{sec:value} studies the value-permutation model \(S_k\curvearrowright[k]^n\). We give closed formulas for \(Q(g,h)\), construct an exact fixed-symbol-set quotient of size \(2^k-k-1\) for \(k\ge2\), and show that it preserves the full nonzero spectrum of both \(Q\) and \(K\). We also analyze the coarser fixed-point-count quotient, its support-count primal pair, and its fixed-\(k\) limiting spectrum.

Third, Section~\ref{sec:coordinate} treats the coordinate-permutation model \(S_n\curvearrowright[k]^n\). We express \(Q(g,h)\) through the joint orbits of \(\langle g,h\rangle\), identify the dual stationary law with Ewens measure of parameter \(k\), and derive mixing and spectral consequences, including the binary spectrum, transitive-start bounds, and identity-start \(L^2\) estimates.

\section*{Acknowledgments}

The author thanks Jason Fulman for suggesting the dual Burnside process, proposing the binary coordinate-permutation formulas, and giving thoughtful feedback. The author also thanks J. E. Paguyo for valuable discussions and comments.

\section{Background and Preliminaries}\label{sec:background}

This section fixes notation for group actions, Markov kernels, total variation, minorization, and lumping, then reviews the classical Burnside kernel \(K\).

Throughout, all groups are finite, and all state spaces are finite and nonempty. In the value- and coordinate-permutation models, \(n\ge1\) unless explicitly stated otherwise. We use \((q!)^{-1}=0\) when \(q<0\).

\subsection{Group Actions}

A left action of \(G\) on \(X\) is written \(G\curvearrowright X\). For \(x\in X\) and \(g\in G\), set
\[
O_x:=\{g\cdot x:g\in G\},\qquad
G_x:=\{g\in G:g\cdot x=x\},\qquad
X_g:=\{x\in X:g\cdot x=x\}.
\]
Orbit-stabilizer gives \(|O_x|=|G|/|G_x|\), and Burnside's lemma follows by counting fixed pairs:
\[
|X/G|=\frac1{|G|}\sum_{g\in G}|X_g|
=\frac1{|G|}\sum_{x\in X}|G_x|.
\]

\subsection{Markov-Chain Notation}

All kernels are row-stochastic. For a kernel \(P\) on a finite set \(S\) with stationary law \(\pi\), write
\[
P_x^t:=P^t(x,\cdot),\qquad
\|\mu-\nu\|_{\TV}:=\frac12\sum_{s\in S}|\mu(s)-\nu(s)|,
\]
\[
d_P(x,t):=\|P_x^t-\pi\|_{\TV},
\qquad
d_P(t):=\max_{x\in S}d_P(x,t),
\]
and
\[
t_{\mix}(P;x,\varepsilon):=\min\{t\ge0:d_P(x,t)\le\varepsilon\},
\qquad
t_{\mix}(P;\varepsilon):=\min\{t\ge0:d_P(t)\le\varepsilon\}.
\]
Throughout the paper, unless explicitly stated otherwise, we assume \(\varepsilon\in(0,1)\). 

A finite irreducible chain has a unique stationary distribution. If an irreducible finite chain has a positive self-loop, then it is aperiodic.

\begin{proposition}[TV contraction; {\cite[Exercise~4.2]{LevinPeres17}}]\label{prop:TV-contraction}
For probability measures \(\mu,\nu\) on \(S\),
\[
\|\mu P-\nu P\|_{\TV}\le \|\mu-\nu\|_{\TV}.
\]
\end{proposition}

\begin{proof}
Set \(\Delta:=\mu-\nu\). Then
\[
2\|\mu P-\nu P\|_{\TV}
=
\sum_{y\in S}\left|\sum_{x\in S}\Delta(x)P(x,y)\right|
\le
\sum_{x\in S}|\Delta(x)|\sum_{y\in S}P(x,y)
=
2\|\mu-\nu\|_{\TV}. \qedhere
\]
\end{proof}

A kernel \(P\) is reversible with respect to \(\pi\) if
\[
\pi(x)P(x,y)=\pi(y)P(y,x).
\]
Equivalently, with \(\Pi:=\operatorname{diag}(\pi)\),
\begin{equation}\label{eq:rev-matrix}
\Pi P=P^\top\Pi.
\end{equation}
Equivalently,
\begin{equation}\label{eq:rev-inner}
\langle f,Pg\rangle_\pi=\langle Pf,g\rangle_\pi,
\qquad
\langle f,g\rangle_\pi:=\sum_xf(x)g(x)\pi(x).
\end{equation}
Thus \(P\) is self-adjoint on \(L^2(\pi)\). If \(\pi(x)>0\) for every \(x\in S\), then \(P\) is diagonally similar to the symmetric matrix
\[
\widetilde P:=\Pi^{1/2}P\Pi^{-1/2},
\qquad
\widetilde P^\top
=
\Pi^{-1/2}P^\top\Pi^{1/2}
=
\Pi^{-1/2}(\Pi P)\Pi^{-1/2}
=
\widetilde P.
\]
Hence \(P\) has real eigenvalues.

For probability measures \(\mu,\pi\) on \(S\), with \(\pi>0\), define
\[
\chi^2(\mu\|\pi):=\sum_{y\in S}\frac{(\mu(y)-\pi(y))^2}{\pi(y)}.
\]
For a start \(x\), write
\[
\chi_x^2(P,t)
:=
\chi^2(P^t(x,\cdot)\|\pi)
=
\sum_{y\in S}\pi(y)
\left(\frac{P^t(x,y)}{\pi(y)}-1\right)^2.
\]
By Cauchy--Schwarz,
\[
4d_P(x,t)^2\le \chi_x^2(P,t).
\]

If \(P\) is reversible, then
\begin{equation}\label{eq:chi2-return}
\chi_x^2(P,t)=\frac{P^{2t}(x,x)}{\pi(x)}-1.
\end{equation}
Indeed, reversibility of \(P^t\) gives
\[
\frac{P^t(x,y)}{\pi(y)}=\frac{P^t(y,x)}{\pi(x)},
\]
so
\[
\chi_x^2(P,t)+1
=
\sum_y\frac{P^t(x,y)^2}{\pi(y)}
=
\frac1{\pi(x)}\sum_yP^t(x,y)P^t(y,x)
=
\frac{P^{2t}(x,x)}{\pi(x)}.
\]
By \cite[Lemma~12.18]{LevinPeres17}, if \((\lambda_i,\phi_i)\) is an \(L^2(\pi)\)-orthonormal eigenbasis with \(\lambda_0=1\), \(\phi_0\equiv1\), then
\[
\chi_x^2(P,t)=\sum_{i\ge1}\lambda_i^{2t}\phi_i(x)^2.
\]
If \(P\) is reversible and irreducible, set
\[
\lambda_1(P):=\max\{\lambda:\lambda\in\Spec(P),\lambda\ne1\},
\qquad
\lambda_*(P):=\max\{|\lambda|:\lambda\in\Spec(P),\lambda\ne1\}.
\]
If there is no nontrivial eigenvalue, set
\[
\lambda_1(P)=\lambda_*(P)=0.
\]
Define
\[
\gap(P):=1-\lambda_*(P),
\qquad
t_{\mathrm{rel}}(P):=\gap(P)^{-1}.
\]

\begin{proposition}[\(L^2\)-contraction; {\cite[(12.8)]{LevinPeres17}}]
\label{prop:L2-contraction}
Assume \(P\) is reversible and irreducible with stationary law \(\pi\). For any
probability measure \(\mu\),
\[
\chi^2(\mu P^t\|\pi)\le \lambda_*^{2t}\chi^2(\mu\|\pi),
\]
and hence
\[
\|\mu P^t-\pi\|_{\TV}
\le
\frac12\lambda_*^t\sqrt{\chi^2(\mu\|\pi)}.
\]
In particular,
\begin{equation}\label{eq:spectral-tv}
d_P(x,t)
\le
\frac12\sqrt{\frac1{\pi(x)}-1}\,\lambda_*^t.
\end{equation}
\end{proposition}

\begin{proof}
Set \(f:=\mu/\pi-1\). Then \(\pi(f)=\sum_x \pi(x)f(x)=0\), and by reversibility
\[
\frac{\mu P^t}{\pi}-1=P^tf.
\]
Applying \cite[(12.8)]{LevinPeres17},
\[
\chi^2(\mu P^t\|\pi)= \left\|\frac{\mu P^t}{\pi}-1\right\|_{L^2(\pi)}^2=\|P^tf\|_{L^2(\pi)}^2
\le \lambda_*^{2t}\|f\|_{L^2(\pi)}^2
=\lambda_*^{2t}\chi^2(\mu\|\pi).
\]
The TV bound follows from Cauchy--Schwarz. Taking \(\mu=\delta_x\) gives \(\chi^2(\delta_x\|\pi)=1/\pi(x)-1\), hence \eqref{eq:spectral-tv}.
\end{proof}

\subsection{Re-indexing Principle}

\begin{lemma}[Re-indexing]\label{lemma:reindexing}
Let \(\Phi:A\to B\) be a map of finite sets. Then, for any \(F:A\to\mathbb R\),
\[
\sum_{a\in A}F(a)
=
\sum_{b\in B}\sum_{\substack{a\in A\\ \Phi(a)=b}}F(a).
\]
If \(F=G\circ\Phi\), then
\[
\sum_{a\in A}G(\Phi(a))
=
\sum_{b\in B}G(b)\,|\Phi^{-1}(b)|.
\]
\end{lemma}

\begin{proof}
Since \(A=\bigsqcup_{b\in B}\Phi^{-1}(b)\),
\[
\sum_{a\in A}F(a)
=
\sum_{b\in B}\sum_{a\in\Phi^{-1}(b)}F(a).
\]
If \(F=G\circ\Phi\), then \(F(a)=G(b)\) on \(\Phi^{-1}(b)\). Hence
\[
\sum_{a\in\Phi^{-1}(b)}F(a)=G(b)|\Phi^{-1}(b)|. \qedhere
\]
\end{proof}

\subsection{Minorization and Lumping}

For \(t_0\in\N\) with \(t_0\ge1\), \(0<\delta\le1\), and a probability measure \(\nu\), a kernel \(P\) satisfies a \((t_0,\delta,\nu)\)-minorization if
\[
P^{t_0}(x,A)\ge\delta\nu(A)
\qquad(x\in S,
A\subseteq S).
\]

\begin{proposition}[Rosenthal's bound {\cite{Rosenthal95}}]\label{prop:rosenthal}
If \(P\) satisfies a \((t_0,\delta,\nu)\)-minorization, then
\[
\|P_x^t-\pi\|_{\TV}\le(1-\delta)^{\lfloor t/t_0\rfloor}, \qquad\text{with }0^0:=1.
\]
Consequently,
\[
d_P(t)\le(1-\delta)^{\lfloor t/t_0\rfloor}.
\]
If \(0<\delta<1\), then
\[
t_{\mix}(P;\varepsilon)
\le
t_0\left\lceil\frac{\log(1/\varepsilon)}{-\log(1-\delta)}\right\rceil
\le
t_0\left\lceil\delta^{-1}\log\frac1\varepsilon\right\rceil.
\]
For \(\delta=1\), the final upper bound remains valid.
\end{proposition}

Let \(\mathcal B=\{B_1,\dots,B_m\}\) be a partition of \(S\). A kernel \(P\) is strongly lumpable with respect to \(\mathcal B\) if
\[
\sum_{y\in B_j}P(x,y)=\sum_{y\in B_j}P(x',y)
\qquad(x,x'\in B_i,
1\le i,j\le m).
\]
Then
\begin{equation}\label{eq:lumpdef}
\bar P(B_i,B_j):=\sum_{y\in B_j}P(x,y)
\qquad(x\in B_i)
\end{equation}
is well defined. Equivalently,
\begin{equation}\label{eq:lumpave}
\bar P(B_i,B_j)=\frac1{|B_i|}\sum_{x\in B_i}\sum_{y\in B_j}P(x,y).
\end{equation}

\begin{proposition}[Basic lumping facts]\label{prop:lumping-facts}
Assume \(P\) is strongly lumpable with respect to \(\mathcal B=\{B_1,\dots,B_m\}\), and let \(\pi\) be stationary for \(P\).
\begin{enumerate}[label=\textup{(\alph*)},itemsep=2pt]
\item The measure
\[
\bar\pi(B_i):=\sum_{x\in B_i}\pi(x)
\]
is stationary for \(\bar P\).
\item If \(P\) is reversible with respect to \(\pi\), then \(\bar P\) is reversible with respect to \(\bar\pi\). If \(P\) is irreducible, then \(\bar P\) is irreducible. If \(\bar P\) is irreducible and reversible, then
\[
\bar P(B_i,B_j)=\bar P(B_j,B_i)\quad\forall i,j
\iff
\bar\pi(B_i)\text{ is independent of }i.
\]
\item \(\Spec(\bar P)\subseteq\Spec(P)\), and every \(\bar P\)-eigenfunction lifts to a block-constant \(P\)-eigenfunction with the same eigenvalue.
\end{enumerate}
\end{proposition}

\begin{proof}
For stationarity,
\[
(\bar\pi\bar P)(B_j)
=
\sum_i\sum_{x\in B_i}\pi(x)\bar P(B_i,B_j)
=
\sum_i\sum_{x\in B_i}\pi(x)\sum_{y\in B_j}P(x,y)
=
\sum_{y\in B_j}\pi(y)
=
\bar\pi(B_j).
\]
For reversibility,
\[
\bar\pi(B_i)\bar P(B_i,B_j)
=
\sum_{x\in B_i}\sum_{y\in B_j}\pi(x)P(x,y)
=
\sum_{x\in B_i}\sum_{y\in B_j}\pi(y)P(y,x)
=
\bar\pi(B_j)\bar P(B_j,B_i).
\]
Irreducible paths project to irreducible paths. Uniform \(\bar\pi\) gives symmetry by reversibility. Conversely, symmetry and reversibility give
\[
(\bar\pi(B_i)-\bar\pi(B_j))\bar P(B_i,B_j)=0,
\]
so irreducibility forces \(\bar\pi\) to be constant on all blocks. The spectral claim is standard; see \cite[Lemma~12.9]{LevinPeres17}.
\end{proof}

\begin{proposition}[TV under lumping]\label{prop:TV-lumping}
Let \(\mathcal B\) be a partition of a finite set \(S\). For probability measures
\(\mu,\nu\) on \(S\), let \(\bar\mu,\bar\nu\) be their pushforwards to \(\mathcal B\).
\begin{enumerate}[label=\textup{(\alph*)},itemsep=2pt]
\item
\[
\|\bar\mu-\bar\nu\|_{\TV}\le\|\mu-\nu\|_{\TV}.
\]
Equality holds if and only if \(\mu-\nu\) has constant sign on each block.
\item If \(\mu\) and \(\nu\) are constant on every block, then
\[
\|\mu-\nu\|_{\TV}=\|\bar\mu-\bar\nu\|_{\TV}.
\]
\end{enumerate}
\end{proposition}

\begin{proof}
Set \(\Delta:=\mu-\nu\). Then
\[
2\|\mu-\nu\|_{\TV}
=
\sum_{B\in\mathcal B}\sum_{x\in B}|\Delta(x)|,
\qquad
2\|\bar\mu-\bar\nu\|_{\TV}
=
\sum_{B\in\mathcal B}\left|\sum_{x\in B}\Delta(x)\right|.
\]
Apply the triangle inequality on each block. Equality holds exactly when \(\Delta\) has one sign on each block. This proves \textup{(a)}. If \(\mu\) and \(\nu\) are block-constant, then \(\Delta\) is block-constant, hence has one sign on each block. Apply \textup{(a)}.
\end{proof}

\subsection{The Classical Burnside Kernel}

The classical Burnside process on \(X\) has kernel
\begin{equation}\label{eq:K-def}
K(x,y)=\frac1{|G_x|}\sum_{g\in G_x\cap G_y}\frac1{|X_g|}.
\end{equation}
Its stationary distribution is
\begin{equation}\label{eq:piK}
\pi_K(x)=\frac{|G_x|}{Z},
\qquad
Z:=\sum_{u\in X}|G_u|=|G|\,|X/G|.
\end{equation}
Equivalently, \(\pi_K(x)=1/(|X/G|\,|O_x|)\). Detailed balance is immediate:
\[
\pi_K(x)K(x,y)
=\frac1Z\sum_{g\in G_x\cap G_y}\frac1{|X_g|}
=\pi_K(y)K(y,x).
\]
Since \(e\in G_x\cap G_y\) and \(X_e=X\),
\[
K(x,y)\ge\frac1{|G_x|\,|X|}>0,
\]
so \(K\) is irreducible and aperiodic.

As mentioned, the Burnside process was introduced by Jerrum~\cite{Jerrum93} for sampling combinatorial objects modulo symmetries. Goldberg--Jerrum~\cite{GoldbergJerrum02} proved torpid-mixing examples, and Chen~\cite{Chen06} collected model-free Doeblin, lumping, and \(L^2\)-to-TV bounds. Geometric eigenvalue bounds of Diaconis--Stroock~\cite{DiaconisStroock91} and Sinclair--Jerrum~\cite{SinclairJerrum89} remain standard tools for relaxation-time estimates.

For the value-permutation model, Rahmani~\cite{RahmaniThesis} computed early formulas, and Paguyo~\cite{Paguyo22} proved rapid mixing for \(S_k\curvearrowright[k]^n\) when \(k\ge n\), together with an \(n\)-independent minorization bound in terms of \(k\). For the binary coordinate-permutation model, Diaconis--Lin--Ram~\cite{DLR25} prove
\[
\Spec(K)=\{0\}\cup
\left\{
\left(\frac{\binom{2m}{m}}{2^{2m}}\right)^2:
0\le m\le\lfloor n/2\rfloor
\right\},
\]
with multiplicity \(2^{n-1}\) at \(0\) and multiplicity \(\binom n{2m}\) at the nonzero eigenvalue indexed by \(m\).

Other variants include Rahmani's commuting chain for conjugation actions, where the primal and dual Burnside chains coincide~\cite{Rahmani22}; orbit-count estimation by Diaconis--Zhong~\cite{DiaconisZhong2025}; partition and contingency-table samplers by Diaconis--Howes~\cite{DiaconisHowes25}; Burnside-process samplers for parking functions and Dyck paths by Feng--Paguyo~\cite{FP26}; lifted-inference applications~\cite{Holtzen20}; unlabeled-tree generation by Bartholdi--Diaconis~\cite{BartholdiDiaconis24}; the flag-variety Burnside process~\cite{BurnsideFlagVariety}; Sylow double-coset cutoff~\cite{HowesSylow25}; and Dittmer's split--hyper--merge (SHM) chain for contingency tables, realized as the Burnside process for the \(H\times K\)-action
\[
(h,k)\cdot s=hsk^{-1}
\]
on \(S_n\), whose orbits are double cosets \(H\backslash S_n/K\) encoding fixed-margin contingency tables~\cite{DittmerThesis}.

\section{Main Theoretical Framework}\label{sec:framework}

This section proves the action-independent results. The mechanism is
\[
Q=AB,
\qquad
K=BA.
\]
It gives stationarity transfer, shared nonzero spectrum, eigenvector transport, equal relaxation times, one-step TV comparison, \(\chi^2\)-transfer, minorization, paired lumpings, and the common-dual cover criterion.

\subsection{Definition and Basic Properties of the Dual Chain}

\begin{definition}[Dual Burnside process]\label{def:dual}
Given a finite action \(G\curvearrowright X\), the dual Burnside process is the Markov chain on
\[
G^*:=\{g\in G:|X_g|>0\}
\]
with kernel
\begin{equation}\label{eq:Q-def}
Q(g,h)=\frac1{|X_g|}\sum_{x\in X_g\cap X_h}\frac1{|G_x|}.
\end{equation}
Equivalently, from \(g\), sample \(x\sim\Unif(X_g)\), then \(h\sim\Unif(G_x)\), and move to \(h\).
\end{definition}

We restrict to \(G^*\) because exactly these elements can occur in the dual update. If \(X_g=\varnothing\), then \(\Unif(X_g)\) is not defined and the formula for \(Q(g,\cdot)\) gives no stochastic row. Since \(e\in G^*\), this restriction also gives irreducibility: any two states \(g,h\in G^*\) communicate through \(e\).

\begin{theorem}[Universal dual stationary law]\label{thm:universal-pi}
Let \(z:=|X/G|\). The probability measure
\begin{equation}\label{eq:piQ}
\pi_Q(g)=\frac{|X_g|}{|G|z},\qquad g\in G^*,
\end{equation}
is stationary and reversible for \(Q\). Moreover, \(Q\) is irreducible and
aperiodic; hence \(\pi_Q\) is unique.
\end{theorem}

\begin{proof}
Let
\[
Z:=\sum_{g\in G}|X_g|=\sum_{x\in X}|G_x|=|G|z.
\]
Then \(\sum_{g\in G^*}\pi_Q(g)=1\). For \(g,h\in G^*\),
\[
\pi_Q(g)Q(g,h)
=\frac{|X_g|}{Z}\cdot\frac1{|X_g|}
\sum_{x\in X_g\cap X_h}\frac1{|G_x|}
=\frac1Z\sum_{x\in X_g\cap X_h}\frac1{|G_x|},
\]
which is symmetric in \(g,h\). Thus detailed balance holds.

For irreducibility, if \(g,h\in G^*\), then
\[
Q(g,e)=\frac1{|X_g|}\sum_{x\in X_g}\frac1{|G_x|}>0,
\qquad
Q(e,h)=\frac1{|X|}\sum_{x\in X_h}\frac1{|G_x|}>0.
\]
Thus \(g\to e\to h\) has positive probability. Also
\[
Q(g,g)=\frac1{|X_g|}\sum_{x\in X_g}\frac1{|G_x|}>0,
\]
so every state has a self-loop.
\end{proof}

\begin{remark}\label{rem:Qgg}
The proof gives
\[
Q(g,g)=Q(g,e)>0\qquad(g\in G^*).
\]
Thus the diagonal equals the \(e\)-column. Unlike \(K\), the dual kernel may have zero entries when \(X_g\cap X_h=\varnothing\); see Appendix~\ref{app:value-examples}.
\end{remark}

\begin{corollary}[Reversibility ratio]\label{cor:revratio}
For all \(g,h\in G^*\),
\[
|X_g|Q(g,h)=|X_h|Q(h,g).
\]
If \(Q(h,g)>0\), then
\[
\frac{Q(g,h)}{Q(h,g)}=\frac{|X_h|}{|X_g|}.
\]
In particular,
\[
\frac{Q(g,e)}{Q(e,g)}=\frac{|X|}{|X_g|}.
\]
\end{corollary}

This gives a quick way to populate the matrix: knowing one triangular half, or any single row or column, determines the corresponding reverse entries by detailed balance.

\subsection{Primal--Dual Factorization}

Define
\begin{equation}\label{eq:legs}
A(g,x):=\frac{\ind_{\{x\in X_g\}}}{|X_g|},
\qquad
B(x,h):=\frac{\ind_{\{h\in G_x\}}}{|G_x|}.
\end{equation}
Rows of \(A\) are indexed by \(G^*\), and rows of \(B\) by \(X\). Both are row-stochastic, with rows supported on \(X_g\) and \(G_x\), respectively.

\begin{lemma}[Factorization]\label{lem:factorisations}
With \(A\) and \(B\) as in \eqref{eq:legs},
\[
Q=AB,
\qquad
K=BA.
\]
\end{lemma}

\begin{proof}
For \(g,h\in G^*\),
\[
(AB)(g,h)=\sum_{x\in X}A(g,x)B(x,h)
=\sum_{x\in X_g\cap X_h}\frac1{|X_g|\,|G_x|}=Q(g,h).
\]
For \(x,y\in X\),
\[
(BA)(x,y)=\sum_{g\in G^*}B(x,g)A(g,y)
=\sum_{g\in G_x\cap G_y}\frac1{|G_x|\,|X_g|}=K(x,y).
\]
Any contributing \(g\in G_x\cap G_y\) automatically lies in \(G^*\).
\end{proof}

Small explicit matrices illustrating \(Q=AB\), \(K=BA\), and the shared nonzero spectrum are given in Appendices~\ref{app:value-examples} and~\ref{app:coordinate-examples}.

\begin{lemma}[$L^2$-adjointness of the legs]\label{lem:legs-adjoint}
Let
\[
\pi_K(x)=\frac{|G_x|}{Z},\qquad
\pi_Q(g)=\frac{|X_g|}{Z},\qquad
Z=|G|\,|X/G|.
\]
Then
\[
A:L^2(\pi_K)\to L^2(\pi_Q),
\qquad
B:L^2(\pi_Q)\to L^2(\pi_K)
\]
are adjoint:
\[
\langle u,Af\rangle_{\pi_Q}=\langle Bu,f\rangle_{\pi_K}
\]
for all $f:X\to\mathbb R$ and $u:G^*\to\mathbb R$. Hence
\[
K=BA=A^*A,
\qquad
Q=AB=AA^*.
\]
In particular, $K$ and $Q$ are positive semidefinite on their respective $L^2$ spaces.
\end{lemma}

\begin{proof}
Using the definitions of $A$ and $\pi_Q$,
\[
\langle u,Af\rangle_{\pi_Q}
=\sum_{g\in G^*}\frac{|X_g|}{Z}u(g)
  \sum_{x\in X}\frac{\ind_{\{x\in X_g\}}}{|X_g|}f(x)
=\frac1Z\sum_{g\in G^*}\sum_{x\in X}u(g)f(x)
\ind_{\{g\cdot x=x\}}.
\]
Using the definitions of $B$ and $\pi_K$,
\[
\langle Bu,f\rangle_{\pi_K}
=\sum_{x\in X}\frac{|G_x|}{Z}f(x)
  \sum_{g\in G^*}\frac{\ind_{\{g\in G_x\}}}{|G_x|}u(g)
=\frac1Z\sum_{x\in X}\sum_{g\in G^*}f(x)u(g)
\ind_{\{g\cdot x=x\}}.
\]
The two sums are equal. Since $K=BA$ and $Q=AB$, adjointness gives
$K=A^*A$ and $Q=AA^*$. Therefore
\[
\begin{aligned}
\langle f,Kf\rangle_{\pi_K}
&=\langle f,A^*Af\rangle_{\pi_K} \\
&=\langle Af,Af\rangle_{\pi_Q} \\
&=\|Af\|_{L^2(\pi_Q)}^2
\ge 0,
\end{aligned}
\qquad
\begin{aligned}
\langle u,Qu\rangle_{\pi_Q}
&=\langle u,AA^*u\rangle_{\pi_Q} \\
&=\langle A^*u,A^*u\rangle_{\pi_K} \\
&=\|Bu\|_{L^2(\pi_K)}^2
\ge 0.
\end{aligned}
\]
\end{proof}

\begin{theorem}[Shared nonzero spectrum]\label{thm:shared-spectrum}
\[
\Spec_{\ne0}(K)=\Spec_{\ne0}(Q),
\]
with matching algebraic multiplicities. The nonsingular parts of their Jordan forms coincide.
\end{theorem}

\begin{proof}
This is the standard matrix fact that \(AB\) and \(BA\) have the same nonzero eigenvalues with the same Jordan structure; see Horn--Johnson~\cite[Exercise~3.2.11]{HJ2013}.
\end{proof}

The following eigenvector transport is the eigenspace form of the \(AB/BA\) correspondence; see also \cite[Proposition~1]{NakatsukasaLowRank}.

\begin{theorem}[Eigenvector transport]\label{prop:eig-intertwine}
Let \(\lambda\ne0\).
\begin{enumerate}[label=\textup{(\alph*)},itemsep=2pt]
\item If \(Kv=\lambda v\), then \(Av\ne0\) and \(Q(Av)=\lambda Av\).
\item If \(Qw=\lambda w\), then \(Bw\ne0\) and \(K(Bw)=\lambda Bw\).
\item On the \(\lambda\)-eigenspaces, \(A\) and \(B\) are inverse up to \(\lambda\):
\[
ABw=\lambda w,
\qquad
BAv=\lambda v.
\]
\end{enumerate}
\end{theorem}

\begin{proof}
Use \(QA=AK\) and \(KB=BQ\). If \(Av=0\), then \(0=BAv=Kv=\lambda v\), so \(v=0\). The proof for \(B\) is identical.
\end{proof}

\begin{corollary}[Nonnegative spectrum, gaps, and relaxation times]\label{cor:gap-equal}
All eigenvalues of \(K\) and \(Q\) are real and nonnegative. Hence
\[
\lambda_*(K)=\lambda_1(K)=\lambda_1(Q)=\lambda_*(Q).
\]
Consequently,
\[
\gap(K)=\gap(Q),
\qquad
t_{\mathrm{rel}}(K)=t_{\mathrm{rel}}(Q).
\]
\end{corollary}

\begin{proof}
Both chains are reversible, hence self-adjoint, and Lemma~\ref{lem:legs-adjoint} shows they are positive semidefinite. Thus all eigenvalues are real and nonnegative. Theorem~\ref{thm:shared-spectrum} gives the same nonzero nontrivial eigenvalues for \(K\) and \(Q\). Since all eigenvalues are nonnegative, we have
\[
\lambda_*(K)=\lambda_1(K),
\qquad
\lambda_*(Q)=\lambda_1(Q).
\]
The same shared-spectrum result gives \(\lambda_1(K)=\lambda_1(Q)\). The gap and relaxation-time identities follow from the definitions.
\end{proof}

\begin{remark}[Rank, zero eigenvalues, and singular-pair transport]
\label{rem:singular-pair-transport}
Let \(\Gamma\) be the \(G^*\times X\) matrix \( \Gamma(g,x):=\ind_{\{g\cdot x=x\}}\), and set
\[
D_G(g,g):=|X_g|,
\qquad
D_X(x,x):=|G_x|,
\qquad
N:=D_G^{-1/2}\Gamma D_X^{-1/2}.
\]
Then
\[
K\sim N^TN,
\qquad
Q\sim NN^T.
\]
Hence
\[
\rank(K)=\rank(Q)=\rank(N)=\rank(\Gamma),
\]
and
\[
\nullity(K)=|X|-\rank(N),
\qquad
\nullity(Q)=|G^*|-\rank(N).
\]

Moreover, if \(Nv=\sigma u\) and \(N^Tu=\sigma v\) with \(\sigma>0\), and
\[
\phi:=D_X^{-1/2}v,
\qquad
\psi:=D_G^{-1/2}u,
\]
then
\[
K\phi=\sigma^2\phi,
\qquad
Q\psi=\sigma^2\psi,
\qquad
A\phi=\sigma\psi,
\qquad
B\psi=\sigma\phi.
\]
Thus singular pairs of \(N\) give the nonzero eigenpairs of \(K\) and \(Q\), together with the \(A/B\)-transport.
\end{remark}

\begin{remark}[Block-flip matrix]\label{rem:block-flip}
The stochastic matrix
\[
M=\begin{pmatrix}0&A\\ B&0\end{pmatrix}
\]
on \(G^*\sqcup X\) satisfies
\[
M^2=\begin{pmatrix}Q&0\\0&K\end{pmatrix}.
\]
Thus every nonzero eigenvalue \(\lambda\) of \(K\) or \(Q\) appears in \(M\) as \(\pm\sqrt\lambda\), and
\[
\ker M=\ker B\oplus\ker A,
\qquad
\nullity(M)=\nullity(Q)+\nullity(K).
\]
The chain \(M\) is bipartite, hence periodic. 

One benefit of this matrix is that it packages the two kernels before multiplication: instead of forming \(AB\) and \(BA\) separately, one studies a single alternating kernel whose square contains both \(Q\) and \(K\).
\end{remark}

\subsection{Stationarity, Total Variation, and Mixing Transfer}

\begin{lemma}[Transfer of stationarity]\label{lem:stationary-transfer}
If \(\pi_K\) is stationary for \(K=BA\), then \(\pi_Q:=\pi_KB\) is stationary for \(Q=AB\), and \(\pi_QA=\pi_K\). Conversely, if \(\pi_Q\) is stationary for \(Q\), then \(\pi_K:=\pi_QA\) is stationary for \(K\), and \(\pi_KB=\pi_Q\).
\end{lemma}

\begin{proof}
If \(\pi_KK=\pi_K\), then
\[
\pi_QQ=\pi_KBAB=\pi_KKB=\pi_KB=\pi_Q,
\qquad
\pi_QA=\pi_KBA=\pi_KK=\pi_K.
\]
The converse is identical.
\end{proof}

\begin{remark}[Recovering the explicit stationary laws]\label{rem:recover-stationary-laws}
For the Burnside legs, Lemma~\ref{lem:stationary-transfer} gives the explicit laws directly. With \(Z=|G|\,|X/G|\),
\[
(\pi_KB)(g)
=
\sum_{x\in X_g}\frac{|G_x|}{Z}\frac1{|G_x|}
=
\frac{|X_g|}{Z}
=
\pi_Q(g),
\]
and
\[
(\pi_QA)(x)
=
\sum_{g\in G_x}\frac{|X_g|}{Z}\frac1{|X_g|}
=
\frac{|G_x|}{Z}
=
\pi_K(x).
\]
\end{remark}

For \(t\ge1\),
\begin{equation}\label{eq:Q-AK}
Q^t=AK^{t-1}B,
\qquad
K^t=BQ^{t-1}A.
\end{equation}

\begin{theorem}[Pointwise transfer inequality]\label{lem:ptwise-transfer}
For \(g\in G^*\), \(x\in X\), and \(t\ge1\),
\[
d_Q(g,t)\le \max_{y\in X_g}d_K(y,t-1),
\qquad
d_K(x,t)\le \max_{h\in G_x}d_Q(h,t-1).
\]
\end{theorem}

\begin{proof}
By \eqref{eq:Q-AK} and Lemma~\ref{lem:stationary-transfer},
\[
Q^t(g,\cdot)-\pi_Q=(A_gK^{t-1}-\pi_K)B.
\]
By Proposition~\ref{prop:TV-contraction},
\[
d_Q(g,t)\le\|A_gK^{t-1}-\pi_K\|_{\TV}.
\]
Since \(A_g\) is uniform on \(X_g\),
\[
A_gK^{t-1}-\pi_K
=\sum_{y\in X_g}A(g,y)(K^{t-1}(y,\cdot)-\pi_K).
\]
The triangle inequality gives
\[
d_Q(g,t)\le\max_{y\in X_g}d_K(y,t-1).
\]
The second bound follows identically from \(K^t=BQ^{t-1}A\) and \(\pi_K=\pi_QA\).
\end{proof}

\begin{corollary}[One-step comparison]\label{lem:TV-step}
For \(t\ge1\),
\[
d_Q(t)\le d_K(t-1),
\qquad
d_K(t)\le d_Q(t-1).
\]
\end{corollary}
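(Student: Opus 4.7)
The plan is to obtain the uniform bound by taking a maximum on both sides of the pointwise transfer inequality established in Theorem~\ref{lem:ptwise-transfer}. Concretely, starting from
\[
\|Q^{t}(g,\cdot)-\pi_Q\|_{\mathrm{TV}}
\ \le\
\max_{x\in X_g}\ \|K^{t-1}(x,\cdot)-\pi_K\|_{\mathrm{TV}},
\]
which is valid for every $g\in G^{*}$ and every $t\ge 1$, I would first note that $X_g\subseteq X$, so the right-hand side is at most $\max_{x\in X}\|K^{t-1}(x,\cdot)-\pi_K\|_{\mathrm{TV}}=d_K(t-1)$, giving the uniform bound $\|Q^{t}(g,\cdot)-\pi_Q\|_{\mathrm{TV}}\le d_K(t-1)$ for each $g\in G^{*}$.

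Second, I would take the maximum over $g\in G^{*}$ on the left, which is precisely the definition of $d_Q(t)$, to conclude
\[
d_Q(t)\ =\ \max_{g\in G^{*}}\|Q^{t}(g,\cdot)-\pi_Q\|_{\mathrm{TV}}
\ \le\ d_K(t-1).
\]
This is essentially bookkeeping once Theorem~\ref{lem:ptwise-transfer} is in hand; the substantive work (the resolvent identity $Q^{t}=AK^{t-1}B$, stationarity transfer via Lemma~\ref{lem:stationary-transfer}, TV contraction under row-stochastic $B$, and the Jensen step against the probability vector $A_g$) has already been absorbed into that theorem.

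There is no real obstacle at this stage, but a minor point to double-check is that $\pi_Q$ and $\pi_K$ in the statement indeed refer to the unique stationary distributions that Theorem~\ref{lem:ptwise-transfer} uses; this is guaranteed by Theorem~\ref{thm:universal-pi} for $Q$ and by the irreducibility/aperiodicity of $K$ discussed in the background section. With that, the one-step comparison follows immediately and explains the ``mixing times differ by at most one step'' claim announced in the abstract.
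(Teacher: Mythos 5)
Your proof is correct and matches the paper's own argument: apply the pointwise transfer bound from Theorem~\ref{lem:ptwise-transfer}, enlarge the maximum from $X_g$ to all of $X$ to recover $d_K(t-1)$, and then maximize over $g\in G^{\ast}$ to obtain $d_Q(t)$. The paper compresses the middle step, but the reasoning is identical.
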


\begin{proof}
Take the maximum over \(g\in G^*\) and \(x\in X\) in Theorem~\ref{lem:ptwise-transfer}.
\end{proof}

\begin{theorem}[Mixing time equivalence]\label{thm:mixt}
We have
\[
|t_{\mix}(Q;\varepsilon)-t_{\mix}(K;\varepsilon)|\le1.
\]
\end{theorem}

\begin{proof}
If \(d_K(s)\le\varepsilon\), then \(d_Q(s+1)\le\varepsilon\). Thus \(t_{\mix}(Q;\varepsilon)\le t_{\mix}(K;\varepsilon)+1\). The other inequality
is symmetric.
\end{proof}

The one-step comparison transfers cutoff statements immediately; for example, Howes~\cite{HowesSylow25} proves cutoff for a Burnside process on Sylow double cosets, hence the corresponding dual chain has the same cutoff location.

\begin{corollary}[Cutoff equivalence]\label{cor:cutoff-equivalence}
For any sequence of finite actions \(G_n\curvearrowright X_n\), assume \(t_{\mix}(K_n;\varepsilon_0)\to\infty\) for some \(\varepsilon_0\in(0,1)\).
Then \((K_n)\) has cutoff at location \(a_n\), meaning
\[
t_{\mix}(K_n;\varepsilon)=a_n(1+o(1))
\]
iff \((Q_n)\) has cutoff at the same location \(a_n\). If
\[
t_{\mix}(K_n;\varepsilon)=a_n+O(w_n),
\]
then the same window \(w_n\) works for \((Q_n)\) when \(w_n\to\infty\); bounded windows are shared up to an \(O(1)\) enlargement.
\end{corollary}

\begin{proof}
By Theorem~\ref{thm:mixt}, the two mixing times differ by at most one for every fixed \(\varepsilon\in(0,1)\). Since \(a_n\to\infty\), this preserves cutoff location. The window statement follows by adding an \(O(1)\) error.
\end{proof}

\begin{corollary}[Start-specific transfer]\label{cor:short-transfer}
Fix \(x_0\in X\), \(0<\alpha<1\), \(0<C_1\le C_2<\infty\), and assume \(C_2\ge\varepsilon\) and
\[
C_1\alpha^t\le d_K(x_0,t)\le C_2\alpha^t\qquad(t\ge0).
\]
Then
\[
\left\lceil\frac{\log(C_1/\varepsilon)}{-\log\alpha}\right\rceil
\le t_{\mix}(K;x_0,\varepsilon)\le
\left\lceil\frac{\log(C_2/\varepsilon)}{-\log\alpha}\right\rceil,
\qquad
d_Q(t)\ge C_1\alpha^{t+1}\quad(t\ge0),
\]
and hence
\[
t_{\mix}(Q;\varepsilon)
\ge
\left\lceil\frac{\log(C_1/\varepsilon)}{-\log\alpha}\right\rceil-1.
\]
If \(h_0\in G_{x_0}\) and \(d_K(x,u)\le C_2\alpha^u\) for all \(u\ge0\) and \(x\in X_{h_0}\), then
\[
d_Q(h_0,t)\le C_2\alpha^{t-1}\quad(t\ge1),
\qquad
t_{\mix}(Q;h_0,\varepsilon)
\le
1+\left\lceil\frac{\log(C_2/\varepsilon)}{-\log\alpha}\right\rceil .
\]
\end{corollary}

\begin{proof}
The bounds for \(t_{\mix}(K;x_0,\varepsilon)\) follow by solving \(C_i\alpha^t\le\varepsilon\); the assumption \(C_2\ge\varepsilon\) ensures the upper-bound time is nonnegative. Since \(d_K(t+1)\le d_Q(t)\),
\[
d_Q(t)\ge d_K(x_0,t+1)\ge C_1\alpha^{t+1},
\]
and solving \(C_1\alpha^{t+1}\le\varepsilon\) gives the stated lower bound for \(t_{\mix}(Q;\varepsilon)\). Finally, Theorem~\ref{lem:ptwise-transfer} gives
\[
d_Q(h_0,t)\le \max_{x\in X_{h_0}}d_K(x,t-1)\le C_2\alpha^{t-1},
\]
and solving \(C_2\alpha^{t-1}\le\varepsilon\) gives the stated upper bound.
\end{proof}

\begin{figure}[H]
    \centering
    \includegraphics[width=.9\linewidth]{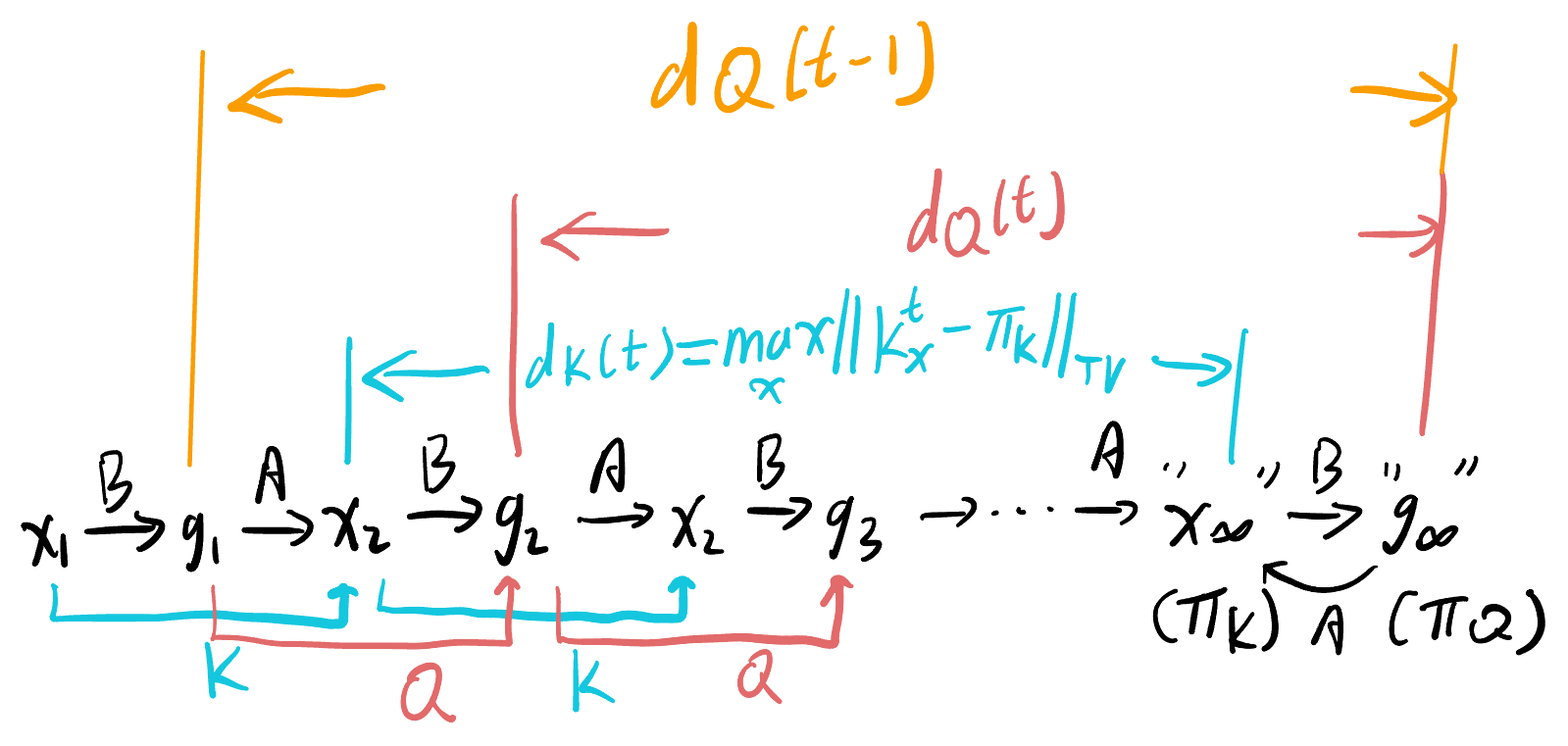}
    \caption{The alternating walk on \(G^*\sqcup X\). Two consecutive legs on
    \(G^*\) give \(Q=AB\), and two consecutive legs on \(X\) give \(K=BA\).}
    \label{fig:primal-dual}
\end{figure}

\begin{theorem}[Exact \(\chi^2\) transfer]\label{thm:exact-chi2-transfer}
Let \(\lambda_1\) be the largest nontrivial eigenvalue of \(K\), equivalently of \(Q\). For \(g\in G^*\) and \(t\ge1\),
\[
\chi_g^2(Q,t)
=
\sum_{x\in X_g}\sum_{y\in X_g}
\frac1{|X_g|^2}
\left(\frac{K^{2t-1}(x,y)}{\pi_K(y)}-1\right).
\]
Consequently,
\[
\chi_g^2(Q,t)
\le
\lambda_1\frac1{|X_g|}\sum_{x\in X_g}\chi_x^2(K,t-1)
\le
\lambda_1\max_{x\in X_g}\chi_x^2(K,t-1).
\]
\end{theorem}

\begin{proof}
By \eqref{eq:chi2-return},
\[
\chi_g^2(Q,t)=\frac{Q^{2t}(g,g)}{\pi_Q(g)}-1.
\]
Recall the normalizing constant \(Z=|G|\,|X/G|\). Using \(Q^{2t}=AK^{2t-1}B\) and
\[
\pi_Q(g)A(g,y)=\pi_K(y)B(y,g)=\frac1Z\ind_{\{y\in X_g\}},
\]
we get
\[
\frac{Q^{2t}(g,g)}{\pi_Q(g)}
=
\sum_{x,y}A(g,x)A(g,y)\frac{K^{2t-1}(x,y)}{\pi_K(y)}.
\]
Subtracting \(1=\sum_{x,y}A(g,x)A(g,y)\), and using that \(A(g,\cdot)\) is uniform on \(X_g\), gives the identity.

Let \(\{\phi_i\}\) be an \(L^2(\pi_K)\)-orthonormal eigenbasis for \(K\), with \(\phi_0\equiv1\). Then
\[
\frac{K^s(x,y)}{\pi_K(y)}-1
=
\sum_{i\ge1}\lambda_i^s\phi_i(x)\phi_i(y).
\]
With \(s=2t-1\),
\[
\chi_g^2(Q,t)
=
\sum_{i\ge1}\lambda_i^{2t-1}
\left(\frac1{|X_g|}\sum_{x\in X_g}\phi_i(x)\right)^2.
\]
For \(t=1\), \(\lambda_i^{2t-1}\le \lambda_1\) is just \(\lambda_i\le\lambda_1\). For \(t\ge2\),
\[
\lambda_i^{2t-1}\le \lambda_1\lambda_i^{2t-2},
\]
since \(0\le\lambda_i\le\lambda_1\). Jensen's inequality gives
\[
\left(\frac1{|X_g|}\sum_{x\in X_g}\phi_i(x)\right)^2
\le
\frac1{|X_g|}\sum_{x\in X_g}\phi_i(x)^2.
\]
Substitute and use the spectral expansion of \(\chi_x^2(K,t-1)\).
\end{proof}

\begin{corollary}[Pointwise \(\chi^2\) transfer]\label{cor:chi2-transfer}
For \(g\in G^*\) and \(t\ge1\),
\[
\chi_g^2(Q,t)\le\max_{x\in X_g}\chi_x^2(K,t-1).
\]
\end{corollary}

\begin{proof}
Use \(\lambda_1\le1\) in Theorem~\ref{thm:exact-chi2-transfer}.
\end{proof}

\begin{remark}[Other divergences]
The same proof as Theorem~\ref{lem:ptwise-transfer} works for any finite \(f\)-divergence
\[
D_\Phi(\mu\|\nu)
=
\sum_s\nu(s)\Phi\!\left(\frac{\mu(s)}{\nu(s)}\right),
\]
where \(\Phi\) is convex and \(\Phi(1)=0\). Thus, for \(t\ge1\),
\[
D_\Phi(Q^t(g,\cdot)\|\pi_Q)
\le
\max_{x\in X_g}D_\Phi(K^{t-1}(x,\cdot)\|\pi_K),
\]
and similarly
\[
D_\Phi(K^t(x,\cdot)\|\pi_K)
\le
\max_{h\in G_x}D_\Phi(Q^{t-1}(h,\cdot)\|\pi_Q).
\]
This includes relative entropy, total variation, and \(\chi^2\)-distance. The stronger return-probability identity in Theorem~\ref{thm:exact-chi2-transfer} is special to \(\chi^2\) and reversibility.
\end{remark}

\subsection{Universal Doeblin Floors}\label{subsec:universal-minorization}

Let
\[
M:=\max_{x\in X}|G_x|.
\]
Chen~\cite{Chen06} proves the model-free floor \(K(x,\cdot)\ge |G|^{-1}\pi_K(\cdot)\). The next floor improves it when \(M<|G|\), and gives the dual analogue.

\begin{lemma}[Uniform floors]\label{lem:universal-floors}
For every finite action \(G\curvearrowright X\):
\begin{enumerate}[label=\textup{(\alph*)},itemsep=2pt]
\item \(Q(g,\cdot)\ge M^{-1}\delta_e(\cdot)\) for \(g\in G^*\).
\item \(K(x,\cdot)\ge M^{-1}\Unif(X)(\cdot)\) for \(x\in X\).
\end{enumerate}
\end{lemma}

\begin{proof}
Since \(e\in G_x\) for all \(x\),
\[
Q(g,e)=\frac1{|X_g|}\sum_{x\in X_g}\frac1{|G_x|}\ge\frac1M.
\]
For \(K\), the identity contributes to every transition:
\[
K(x,y)\ge\frac1{|G_x|\,|X_e|}=\frac1{|G_x|\,|X|}\ge\frac1{M|X|}.
\]
Summing over \(y\in A\) gives the stated minorization.
\end{proof}

\begin{theorem}[Universal mixing bound]\label{cor:universal-mix}
For all \(t\ge1\),
\[
d_Q(t)\le(1-M^{-1})^t,
\qquad
d_K(t)\le(1-M^{-1})^t.
\]
Thus
\[
t_{\mix}(Q;\varepsilon),\ t_{\mix}(K;\varepsilon)
\le
\left\lceil M\log\frac1\varepsilon\right\rceil.
\]
\end{theorem}

\begin{proof}
Apply Proposition~\ref{prop:rosenthal} with \(t_0=1\) and \(\delta=M^{-1}\).
\end{proof}

Although the one-step TV comparison gives sharper TV bounds, minorization itself also transfers through the factorization at the cost of two steps.

\begin{theorem}[Two-step minorization transfer]\label{prop:two-step-transfer}
If \(K\ge\delta\nu\) row-wise on \(X\), then
\[
Q^2\ge\delta(\nu B)
\]
row-wise on \(G^*\). Hence
\[
d_Q(t)\le(1-\delta)^{\lfloor t/2\rfloor}.
\]
\end{theorem}

\begin{proof}
Since \(K\ge\delta\nu\) and \(B\) is stochastic, \(KB\ge\delta(\nu B)\). Then
\[
Q^2(g,\cdot)=(AKB)(g,\cdot)=\sum_xA(g,x)(KB)(x,\cdot)\ge\delta(\nu B).
\]
Apply Proposition~\ref{prop:rosenthal} with \(t_0=2\).
\end{proof}

\subsection{Lumping Principle}\label{sec:lumpingp}

\subsubsection{General lumping and TV comparison}\label{subsubsec:general-lumping-TV}

Recall \(Q=AB\) and \(K=BA\).

\begin{lemma}[Orbit-invariance propagates]\label{lem:row-orbit-invariance}
Let \(G\) act on \(S\), and let \(P\) be \(G\)-equivariant:
\[
P(gs,gt)=P(s,t).
\]
If \(P(s,\cdot)\) is orbit-constant, i.e.,
\[
P(s,gz)=P(s,z)\qquad(g\in G,\ z\in S),
\]
then \(P^t(s,\cdot)\) is orbit-constant for every \(t\ge1\).
\end{lemma}

\begin{proof}
The case \(t=1\) is the hypothesis. Assume \(P^t(s,\cdot)\) is orbit-constant.
Then
\[
\begin{aligned}
P^{t+1}(s,gz)
&=\sum_{u\in S}P^t(s,u)P(u,gz)\\
&=\sum_{u\in S}P^t(s,u)P(g^{-1}u,z)
\qquad\text{by \(G\)-equivariance}\\
&=\sum_{v\in S}P^t(s,gv)P(v,z)
\qquad (v=g^{-1}u)\\
&=\sum_{v\in S}P^t(s,v)P(v,z)
=P^{t+1}(s,z).
\end{aligned} 
\]
\end{proof}

\begin{theorem}[Equivariant lumping and TV comparison]\label{thm:eq-lumping-TV}
Let \(G\) act on \(S\), and let \(P\) be \(G\)-equivariant (\(P(au,av)=P(u,v)\) for all \(a\in G\)). Then \(P\) is strongly lumpable by \(G\)-orbits. Writing \([s]\) for the orbit of \(s\),
\[
\bar P([s],[t])=\sum_{u\in[t]}P(s,u)
\]
is well defined. If \(\pi\) is stationary for \(P\), then the pushforward \(\bar\pi([s])=\sum_{u\in[s]}\pi(u)\) is stationary for \(\bar P\). Moreover,
\[
\|P^t(s,\cdot)-\pi\|_{\TV}
\ge
\|\bar P^t([s],\cdot)-\bar\pi\|_{\TV}.
\]
If \(\pi\) is orbit-constant (\(\pi(au)=\pi(u)\) for all \(a\in G\)) and \(P(s,\cdot)\) is orbit-constant (\(P(s,au)=P(s,u)\) for all \(a\in G\)), then equality holds for all \(t\ge1\).
\end{theorem}

\begin{proof}
If \(s'=gs\), then
\[
\sum_{u\in[t]}P(s',u)
=\sum_{u\in[t]}P(gs,u)
=\sum_{u\in[t]}P(s,g^{-1}u)
=\sum_{v\in[t]}P(s,v).
\]
Thus the lumping is well defined. Stationarity is Proposition~\ref{prop:lumping-facts}\textup{(a)}.
The TV inequality is Proposition~\ref{prop:TV-lumping}\textup{(a)}. If \(P(s,\cdot)\) is orbit-constant, Lemma~\ref{lem:row-orbit-invariance} gives orbit-constancy of \(P^t(s,\cdot)\) for \(t\ge1\); then Proposition~\ref{prop:TV-lumping}\textup{(b)} gives equality.
\end{proof}

\begin{lemma}[Subset aggregation]\label{lem:subset-agg}
For \(\Omega\subseteq X\) and \(S\subseteq G^*\),
\[
(A\mathbf1_\Omega)(g)=\frac{|X_g\cap\Omega|}{|X_g|},
\qquad
(B\mathbf1_S)(x)=\frac{|G_x\cap S|}{|G_x|}.
\]
Hence
\[
\sum_{y\in\Omega}K(x,y)
=
\frac1{|G_x|}\sum_{h\in G_x}\frac{|X_h\cap\Omega|}{|X_h|},
\]
and
\[
\sum_{h\in S}Q(g,h)
=
\frac1{|X_g|}\sum_{x\in X_g}\frac{|G_x\cap S|}{|G_x|}.
\]
\end{lemma}

\begin{proof}
The first two identities are the definitions of \(A\) and \(B\); the last two are \(K=BA\) and \(Q=AB\).
\end{proof}

\begin{lemma}[Conjugacy and orbit invariance]\label{lem:inv-block}
For \(a,g\in G\) and \(x\in X\),
\[
G_{ax}=aG_xa^{-1},
\qquad
X_{aga^{-1}}=aX_g.
\]
Consequently, \(|G_{ax}\cap C|=|G_x\cap C|\) for every conjugacy class \(C\), and \(|X_{aga^{-1}}\cap O|=|X_g\cap O|\) for every orbit \(O\subseteq X\).
\end{lemma}

\begin{proof}
\[
t\in G_{ax}
\iff
 ta x=ax
\iff
a^{-1}ta\in G_x.
\]
Thus \(G_{ax}=aG_xa^{-1}\). Similarly,
\[
y\in X_{aga^{-1}}
\iff
a^{-1}y\in X_g,
\]
so \(X_{aga^{-1}}=aX_g\). The final claims follow because conjugation preserves conjugacy classes and the \(G\)-action preserves orbits.
\end{proof}

\begin{corollary}[Orbit lumping for \(K\)]\label{cor:lump-K-TV}
The Burnside kernel \(K\) is strongly lumpable by \(G\)-orbits in \(X\). Its lumped kernel is
\begin{equation}\label{eq:Kbarorb}
\bar K([x],[y])
=
\frac1{|G_x|}\sum_{h\in G_x}\frac{|X_h\cap[y]|}{|X_h|}.
\end{equation}
The lumped stationary law is uniform:
\[
\bar\pi_K([x])=\frac1{|X/G|}.
\]
Moreover, \(\bar K\) is reversible, irreducible, and symmetric. For all \(t\),
\[
d_K(x,t)\ge d_{\bar K}([x],t).
\]
If \(K(x,\cdot)\) is orbit-constant, then equality holds for all \(t\ge1\).
If \(G\) is abelian, equality holds for every start and every \(t\ge1\).
\end{corollary}

\begin{proof}
Lemma~\ref{lem:inv-block} gives \(K(ax,ay)=K(x,y)\). Then we apply Theorem~\ref{thm:eq-lumping-TV}. Formula \eqref{eq:Kbarorb} is Lemma~\ref{lem:subset-agg} with \(\Omega=[y]\). Since \(\pi_K(x)=1/(|X/G|\,|[x]|)\), the pushforward is uniform. By Proposition~\ref{prop:lumping-facts}, reversibility and irreducibility transfer; uniform stationary law gives symmetry. If \(G\) is abelian, then \(G_{az}=aG_za^{-1}=G_z\), hence \(K(x,az)=K(x,z)\).
\end{proof}

Orbit lumping for \(K\) is standard in Burnside-process analyses; see, for example, Paguyo~\cite{Paguyo22} for the value-permutation model, and Diaconis~\cite{Diaconis05} and Diaconis--Zhong~\cite{DiaconisZhong21} for coordinate-permutation models.

\begin{example}[Starts where orbit lumping preserves TV]\label{ex:orbit-TV-equality}
\leavevmode
\begin{enumerate}[label=\textup{(\alph*)},itemsep=2pt]
\item In the value-permutation model \(S_k\curvearrowright[k]^n\), if \(x\) uses all \(k\) symbols, then \(G_x=\{e\}\), so
\[
K(x,\cdot)=\Unif([k]^n).
\]
Hence \(K(x,\cdot)\) is orbit-constant, and Corollary~\ref{cor:lump-K-TV} gives
\[
d_K(x,t)=d_{\bar K}([x],t)\qquad(t\ge1).
\]

\item In the coordinate-permutation model \(S_n\curvearrowright[k]^n\), if \(x=a^n\) is constant, then \(G_x=S_n\), and
\[
K(x,z)=\frac1{n!}\sum_{g\in G_z}\frac1{|X_g|}.
\]
Thus \(K(x,z)\) depends only on the orbit of \(z\), so
\[
d_K(x,t)=d_{\bar K}([x],t)\qquad(t\ge1).
\]
\end{enumerate}
\end{example}

\begin{proposition}[Chen's orbit-chain bound]\label{prop:chen-orbit-only}
For the orbit-lumped Burnside chain, and \(t\ge1\),
\[
d_{\bar K}(t)\le\left(1-\frac1{|X|}\right)^t,
\qquad
t_{\mix}(\bar K;\varepsilon)
\le
\left\lceil |X|\log\frac1\varepsilon\right\rceil .
\]
Whenever TV is preserved by orbit lumping from \(x\), the same bound holds for \(K\) from \(x\) for \(t\ge1\).
\end{proposition}

\begin{proof}
This is Chen's coupling bound~\cite[Proposition~11, Section~5]{Chen06}, plus Corollary~\ref{cor:lump-K-TV}.
\end{proof}

\begin{corollary}[Conjugacy lumping for \(Q\)]\label{cor:lump-Q-TV}
The dual kernel \(Q\) is strongly lumpable by conjugacy classes in \(G^*\). Its lumped kernel is
\begin{equation}\label{eq:Qbarconj}
\bar Q([g],[h])
=
\frac1{|X_g|}\sum_{x\in X_g}\frac{|G_x\cap[h]|}{|G_x|}.
\end{equation}
The lumped stationary law is
\[
\bar\pi_Q([g])=\frac{|[g]|\,|X_g|}{|G|\,|X/G|}.
\]
Moreover, \(\bar Q\) is reversible and irreducible. For all \(t\),
\[
d_Q(g,t)\ge d_{\bar Q}([g],t).
\]
If \(Q(g,\cdot)\) is class-constant, then equality holds for all \(t\ge1\). This holds in particular when \(g\in Z(G)\).
\end{corollary}

\begin{proof}
Lemma~\ref{lem:inv-block} gives \(Q(aga^{-1},aha^{-1})=Q(g,h)\). Then we apply Theorem~\ref{thm:eq-lumping-TV}. Formula \eqref{eq:Qbarconj} is Lemma~\ref{lem:subset-agg} with \(S=[h]\). The stationary formula follows from \(\pi_Q(g)=|X_g|/(|G|\,|X/G|)\). If \(g\in Z(G)\), then \(aX_g=X_g\), and the change of variables \(x=ay\) gives \(Q(g,aha^{-1})=Q(g,h)\).
\end{proof}

\begin{example}[Starts where conjugacy lumping preserves TV]
\label{ex:conj-TV-equality}
\leavevmode
\begin{enumerate}[label=\textup{(\alph*)},itemsep=2pt]
\item For every action,
\[
Q(e,h)=\frac1{|X|}\sum_{x\in X_h}\frac1{|G_x|}
\]
is constant on conjugacy classes of \(h\). Hence, for the conjugacy quotient \(\bar Q\),
\[
d_Q(e,t)=d_{\bar Q}([e],t)\qquad(t\ge1).
\]

\item In the coordinate-permutation model \(S_n\curvearrowright[k]^n\), if \(g\) is an \(n\)-cycle, then \(Q(g,\cdot)\equiv 1/n!\). Hence
\[
d_Q(g,t)=d_{\bar Q}([g],t)\qquad(t\ge1).
\]
\end{enumerate}
\end{example}

\subsubsection{Auxiliary-variable scheme for lumped kernels}
\label{subsec:paired-lumpings}

This subsection packages quotient structures induced by the two legs. It is an auxiliary-variable statement; Gibbs sampling is a classical example \cite{DKSC08}. The twisted Burnside process of Diaconis--Zhong~\cite{DiaconisZhong21} also has the same form, with legs
\[
A_v(g,x)=\frac{v(x)\ind_{\{x\in X_g\}}}{\sum_{u\in X_g}v(u)},
\qquad
B_w(x,h)=\frac{w(h)\ind_{\{h\in G_x\}}}{\sum_{u\in G_x}w(u)}.
\]

All algebraic consequences of
\[
Q=AB,\qquad K=BA
\]
extend to any finite auxiliary-variable pair \(A:\mathcal Y\to X\), \(B:X\to\mathcal Y\) of compatible sizes; in particular, Theorems~\ref{thm:shared-spectrum} and~\ref{prop:eig-intertwine} remain valid.
If \(A\) and \(B\) are row-stochastic, then \(Q\) and \(K\) are Markov kernels, Lemma~\ref{lem:stationary-transfer} applies, and Theorem~\ref{prop:two-step-transfer} remains valid. For statements involving \(d_P\) or \(t_{\mix}\), fix paired stationary laws \(\pi_{\mathcal Y},\pi_X\) satisfying
\[
\pi_{\mathcal Y}A=\pi_X,\qquad \pi_XB=\pi_{\mathcal Y},
\]
as in Lemma~\ref{lem:stationary-transfer}. With respect to these laws, the pointwise transfer theorem becomes
\[
d_Q(y,t)\le \max_{x:A(y,x)>0}d_K(x,t-1),
\qquad
d_K(x,t)\le \max_{y:B(x,y)>0}d_Q(y,t-1)
\]
for \(t\ge1\). Consequently Corollary~\ref{lem:TV-step} holds. If one of \(t_{\mix}(Q;\varepsilon)\) and \(t_{\mix}(K;\varepsilon)\) is finite, then so is the other, and Theorem~\ref{thm:mixt} gives the one-step comparison. Results using adjointness, reversibility, positive semidefiniteness, block-flip nullity, or \(\chi^2\)-return identities require the corresponding extra structure.

\begin{definition}[Compatible paired partitions]
\label{def:compatible-paired-partitions}
Let \(A:Y\to X\) and \(B:X\to Y\) be row-stochastic, and set \(Q:=AB\), \(K:=BA\). Let \(\mathcal C=\{C_\alpha\}\) partition \(Y\), and \(\mathcal D=\{D_i\}\) partition \(X\). The pair \((\mathcal C,\mathcal D)\) is compatible with \(A,B\) if, for all \(\alpha,i\),
\[
\sum_{x\in D_i}A(y,x)\text{ is independent of }y\in C_\alpha,
\qquad
\sum_{y\in C_\alpha}B(x,y)\text{ is independent of }x\in D_i.
\]
\end{definition}

\begin{theorem}[Paired quotient theorem]
\label{thm:paired-lumpings-general}
Assume \((\mathcal C,\mathcal D)\) is compatible. Define
\[
\bar A(C_\alpha,D_i):=\sum_{x\in D_i}A(y,x)\quad(y\in C_\alpha),
\qquad
\bar B(D_i,C_\alpha):=\sum_{y\in C_\alpha}B(x,y)\quad(x\in D_i).
\]
Then \(\bar A,\bar B\) are row-stochastic, \(Q=AB\) lumps by \(\mathcal C\), \(K=BA\) lumps by \(\mathcal D\), and
\[
\bar Q=\bar A\bar B,
\qquad
\bar K=\bar B\bar A.
\]
Hence
\[
\Spec_{\ne0}(\bar Q)=\Spec_{\ne0}(\bar K)
\]
with matching algebraic multiplicities.
\end{theorem}

\begin{proof}
Row-stochasticity follows by summing over all target blocks. For \(y\in C_\alpha\),
\[
\sum_{y'\in C_\beta}Q(y,y')
=
\sum_i\sum_{x\in D_i}A(y,x)\sum_{y'\in C_\beta}B(x,y')
=
\sum_i\bar A(C_\alpha,D_i)\bar B(D_i,C_\beta),
\]
where the first equality uses compatibility of \(B\), and the second uses compatibility of \(A\).
Thus \(Q\) strongly lumps and \(\bar Q=\bar A\bar B\). The proof for \(K\) is identical:
\[
\sum_{x'\in D_j}K(x,x')
=
\sum_\alpha\bar B(D_i,C_\alpha)\bar A(C_\alpha,D_j).
\]
The spectral equality follows by the same \(AB/BA\) argument used in Theorem~\ref{thm:shared-spectrum}.
\end{proof}

Specializing the paired quotient theorem to the Burnside legs gives explicit block-sum formulas.

\begin{corollary}[Burnside paired quotient formulas]
\label{cor:burnside-paired-quotient-formulas}
Let \(\mathcal C=\{C_\alpha\}\) be a partition of \(G^*\), and let \(\mathcal D=\{D_i\}\) be a partition of \(X\). Assume
\((\mathcal C,\mathcal D)\) is compatible for the Burnside legs. Then
\[
\bar A(C_\alpha,D_i)=\frac{|X_g\cap D_i|}{|X_g|}
\quad(g\in C_\alpha),
\qquad
\bar B(D_i,C_\alpha)=\frac{|G_x\cap C_\alpha|}{|G_x|}
\quad(x\in D_i).
\]
Moreover, the lumped kernels are
\[
\bar Q(C_\alpha,C_\beta)
=
\sum_{h\in C_\beta}Q(g,h)
=
\frac1{|X_g|}
\sum_{x\in X_g}\frac{|G_x\cap C_\beta|}{|G_x|}
\qquad(g\in C_\alpha),
\]
and
\[
\bar K(D_i,D_j)
=
\sum_{y\in D_j}K(x,y)
=
\frac1{|G_x|}
\sum_{a\in G_x}\frac{|X_a\cap D_j|}{|X_a|}
\qquad(x\in D_i).
\]
The stationary laws are
\[
\bar\pi_Q(C_\alpha)=\frac1Z\sum_{g\in C_\alpha}|X_g|,
\qquad
\bar\pi_K(D_i)=\frac1Z\sum_{x\in D_i}|G_x|,
\qquad
Z=|G|\,|X/G|.
\]
\end{corollary}

\begin{proof}
The formulas for \(\bar A\) and \(\bar B\) follow from Lemma~\ref{lem:subset-agg} with \(\Omega=D_i\) and \(S=C_\alpha\). By
Theorem~\ref{thm:paired-lumpings-general}, \(Q\) and \(K\) lump by \(\mathcal C\) and \(\mathcal D\). Hence
\[
\bar Q(C_\alpha,C_\beta)=\sum_{h\in C_\beta}Q(g,h),
\qquad
\bar K(D_i,D_j)=\sum_{y\in D_j}K(x,y).
\]
The two block-sum formulas again follow from Lemma~\ref{lem:subset-agg}, now with \(S=C_\beta\) and \(\Omega=D_j\). The stationary laws are the pushforwards of \(\pi_Q(g)=|X_g|/Z\) and \(\pi_K(x)=|G_x|/Z\).
\end{proof}

The useful paired quotients are:
\[
\begin{array}{c|c|c|c}
\text{pair} & \text{partition of }X & \text{partition of }G^* & \text{lumpability}\\
\hline
\text{orbit/conjugacy} & X/G & \Conj(G)\cap G^* & \text{model-free}\\[2pt]
\text{exact stabilizer/fixed set} & D_H=\{x:G_x=H\} & E_F=\{g:X_g=F\} & \text{model-free}\\[2pt]
\text{support/fixed-symbol count} & P_m=\{x:|\supp(x)|=m\} & C_r=\{g:|\Fix(g)|=r\} & \text{value model}
\end{array}
\]
In the last row, for the value-permutation model \(S_k\curvearrowright[k]^n\),
\[
\supp(x):=\{x_1,\dots,x_n\},
\qquad
\Fix(g):=\{a\in[k]:g(a)=a\}.
\]
This last quotient pair is discussed in Section~\ref{subsec:fixed-set-and-count-quotients}.

\begin{remark}[Quotient notation]\label{rem:quotient-notation}
Superscripts specify the quotient pair when needed; for example, \(\bar Q^{\mathrm{oc}}\) is the conjugacy-class quotient of \(Q\) paired with the orbit quotient \(\bar K^{\mathrm{oc}}\). We omit the superscript when the quotient is clear from context.
\end{remark}

\begin{corollary}[Orbit/conjugacy quotient pair]
\label{prop:orbit-conj-paired}
Let \(\mathcal O=X/G\), and let \(\mathcal C\) be the conjugacy classes in \(G^*\). Then the pair \((\mathcal C,\mathcal O)\) is compatible. Hence, with
\[
\bar A^{\mathrm{oc}}([g],[x])=\frac{|X_g\cap[x]|}{|X_g|},
\qquad
\bar B^{\mathrm{oc}}([x],[g])=\frac{|G_x\cap[g]|}{|G_x|},
\]
we have
\[
\bar Q^{\mathrm{oc}}=\bar A^{\mathrm{oc}}\bar B^{\mathrm{oc}},
\qquad
\bar K^{\mathrm{oc}}=\bar B^{\mathrm{oc}}\bar A^{\mathrm{oc}},
\]
and
\[
\Spec_{\ne0}(\bar Q^{\mathrm{oc}})
=
\Spec_{\ne0}(\bar K^{\mathrm{oc}})
\]
with matching algebraic multiplicities.
\end{corollary}

\begin{proof}
Compatibility follows from Lemma~\ref{lem:inv-block}. The formulas for \(\bar A^{\mathrm{oc}}\) and \(\bar B^{\mathrm{oc}}\) are Corollary~\ref{cor:burnside-paired-quotient-formulas}. The factorization and spectral equality follow from Theorem~\ref{thm:paired-lumpings-general}.
\end{proof}

\begin{remark}
This quotient pair is not generally spectral-complete for \(Q,K\). The quotient TV distances give lower bounds for the original chains, with guaranteed equality in the cases covered by Corollaries~\ref{cor:lump-K-TV} and \ref{cor:lump-Q-TV}.
\end{remark}

\begin{theorem}[Exact stabilizer/fixed-set quotient pair]
\label{thm:exact-fix-stab-pair}\label{thm:model-free-fixed-set-quotient}
Let
\[
\mathfrak F:=\{F\subseteq X:F=X_g\text{ for some }g\in G^*\},
\qquad
E_F:=\{g\in G^*:X_g=F\},
\]
and
\[
\mathfrak S:=\{H\le G:H=G_x\text{ for some }x\in X\},
\qquad
D_H:=\{x\in X:G_x=H\}.
\]
Then \(\{E_F\}\) and \(\{D_H\}\) are compatible. The quotient legs are
\[
\bar A^{\mathrm{fs}}(F,H)=\frac{|F\cap D_H|}{|F|},
\qquad
\bar B^{\mathrm{fs}}(H,F)=\frac{|H\cap E_F|}{|H|},
\]
and
\[
\bar Q^{\mathrm{fs}}=\bar A^{\mathrm{fs}}\bar B^{\mathrm{fs}},
\qquad
\bar K^{\mathrm{fs}}=\bar B^{\mathrm{fs}}\bar A^{\mathrm{fs}}.
\]
Moreover,
\[
\bar Q^{\mathrm{fs}}(F,F')
=
\frac{|E_{F'}|}{|F|}
\sum_{u\in F\cap F'}\frac1{|G_u|},
\qquad
\bar K^{\mathrm{fs}}(H,L)
=
\frac{|D_L|}{|H|}
\sum_{a\in H\cap L}\frac1{|X_a|}.
\]
Their stationary laws are
\[
\bar\pi_Q^{\mathrm{fs}}(F)=\frac{|E_F|\,|F|}{Z},
\qquad
\bar\pi_K^{\mathrm{fs}}(H)=\frac{|D_H|\,|H|}{Z},
\qquad
Z=|G|\,|X/G|.
\]
Finally,
\[
\Spec_{\ne0}(Q)=\Spec_{\ne0}(\bar Q^{\mathrm{fs}})
=\Spec_{\ne0}(K)=\Spec_{\ne0}(\bar K^{\mathrm{fs}}),
\]
with matching algebraic multiplicities, equivalently
\[
\det(\lambda I_{G^*}-Q)
=
\lambda^{|G^*|-|\mathfrak F|}
\det(\lambda I_{\mathfrak F}-\bar Q^{\mathrm{fs}}),
\qquad
\det(\lambda I_X-K)
=
\lambda^{|X|-|\mathfrak S|}
\det(\lambda I_{\mathfrak S}-\bar K^{\mathrm{fs}}).
\]
For \(g\in E_F\), \(x\in D_H\), and \(t\ge1\),
\[
d_Q(g,t)=d_{\bar Q^{\mathrm{fs}}}(F,t),
\qquad
d_K(x,t)=d_{\bar K^{\mathrm{fs}}}(H,t).
\]
\end{theorem}

\begin{proof}
For any \(g\in E_F\) and \(x\in D_H\),
\[
\frac{|X_g\cap D_H|}{|X_g|}
=
\frac{|F\cap D_H|}{|F|},
\qquad
\frac{|G_x\cap E_F|}{|G_x|}
=
\frac{|H\cap E_F|}{|H|}.
\]
Hence the pair \((\{E_F\},\{D_H\})\) is compatible. Corollary~\ref{cor:burnside-paired-quotient-formulas} gives the formulas for \(\bar A^{\mathrm{fs}}\) and \(\bar B^{\mathrm{fs}}\) and the stationary laws, and Theorem~\ref{thm:paired-lumpings-general} gives
\[
\bar Q^{\mathrm{fs}}=\bar A^{\mathrm{fs}}\bar B^{\mathrm{fs}},
\qquad
\bar K^{\mathrm{fs}}=\bar B^{\mathrm{fs}}\bar A^{\mathrm{fs}}.
\]
By Corollary~\ref{cor:burnside-paired-quotient-formulas}, together with
\[
|G_u\cap E_{F'}|=|E_{F'}|\mathbf 1_{\{u\in F'\}},
\qquad
|X_a\cap D_L|=|D_L|\mathbf 1_{\{a\in L\}},
\]
we get the formulas for \(\bar Q^{\mathrm{fs}}(F,F')\) and \(\bar K^{\mathrm{fs}}(H,L)\).

Define
\[
R_Q(g,F):=\mathbf 1_{\{g\in E_F\}},
\qquad
L_Q(F,g):=\frac{\mathbf 1_{\{g\in E_F\}}}{|E_F|}.
\]
Then we have
\[
L_QR_Q=I_{\mathfrak F},
\qquad
Q=R_Q\bar Q^{\mathrm{fs}}L_Q,
\qquad
\bar Q^{\mathrm{fs}}=L_QQR_Q.
\]
Thus \(Q=(R_Q\bar Q^{\mathrm{fs}})L_Q\) and \(L_Q(R_Q\bar Q^{\mathrm{fs}})=\bar Q^{\mathrm{fs}}\). The same \(AB/BA\) matrix fact (used in Theorem~\ref{thm:shared-spectrum}) and Sylvester's identity give \(\Spec_{\ne0}(Q)=\Spec_{\ne0}(\bar Q^{\mathrm{fs}})\) with matching algebraic multiplicities and the determinant identity for \(Q\).
The proof for \(K\) is identical using \(D_H\). Together with Theorem~\ref{thm:paired-lumpings-general}, this gives the common nonzero spectrum.

Finally, for \(t\ge1\), \(Q^t(g,\cdot)\) and \(\pi_Q\) are constant on each \(E_F\), and \(K^t(x,\cdot)\) and \(\pi_K\) are constant on each \(D_H\).
Therefore Proposition~\ref{prop:TV-lumping}\textup{(b)} gives the two TV identities.
\end{proof}

Finally, we show that coarsening conjugacy classes by the fixed-state count \(|X_g|\) need not give a lumping. For an action \(G\curvearrowright X\), set
\[
B_s:=\{g\in G^*:|X_g|=s\}.
\]
Since \(|X_g|\) is a class function, i.e. constant on conjugacy classes, the partition \(\{B_s\}\) is coarser than the conjugacy-class partition. Strong lumpability would require
\[
\sum_{h\in B_s}Q(g,h)
\]
to depend only on \(|X_g|\). The following example shows this can fail. Even when a coarser lumping exists, TV only contracts under pushforward unless the block-constancy condition in Proposition~\ref{prop:TV-lumping} holds.

\begin{example}[Fixed-state-count lumping can fail]\label{ex:cyclecount-fail-n4}
In the coordinate-permutation model \(S_4\curvearrowright\{0,1\}^4\), \(|X_\sigma|=2^{c(\sigma)}\), where \(c(\sigma)\) is the number of cycles of \(\sigma\). Take
\[
g_1=(12)(34),
\qquad
g_2=(123).
\]
Then \(|X_{g_1}|=|X_{g_2}|=4\). For
\[
B_4=\{h\in S_4:|X_h|=4\}=\{h\in S_4:c(h)=2\},
\]
a direct calculation gives
\[
\sum_{h\in B_4}Q(g_1,h)=\frac{17}{48},
\qquad
\sum_{h\in B_4}Q(g_2,h)=\frac{19}{48}.
\]
Thus fixed-state-count lumping is not strongly lumpable for \(Q\).
\end{example}

\subsection{Common Duals Under Uniform Covers}

\begin{definition}[Uniform stabilizer-preserving cover]\label{def:uniform-cover}
Let \(G\) act on finite sets \(X\) and \(Y\). A map \(p:X\to Y\) is a uniform
stabilizer-preserving \(G\)-cover of degree \(m\ge1\) if
\begin{enumerate}[label=\textup{(\roman*)},itemsep=2pt]
\item \(p(gx)=gp(x)\) for all \(g\in G\), \(x\in X\);
\item \(|p^{-1}(y)|=m\) for all \(y\in Y\);
\item \(G_x=G_{p(x)}\) for all \(x\in X\).
\end{enumerate}
\end{definition}

\begin{proposition}[Free commuting slice criterion]
\label{prop:free-commuting-slice}
Let \(G\) and \(H\) act on a finite set \(X\), and let \(Y\subseteq X\).
Assume the actions commute (\(g(hx)=h(gx)\)), the \(H\)-action is free (\(hx=x\Rightarrow h=e_H\)), \(Y\) is \(G\)-invariant (\(x\in Y\Rightarrow gx\in Y\)), and \(Y\) is a slice for the \(H\)-orbits (\(|Y\cap Hx|=1\) for \(x\in X\)).
Let \(p:X\to Y\) send \(x\) to the unique point of \(Y\cap Hx\). Then \(p\) is a uniform stabilizer-preserving \(G\)-cover of degree \(|H|\).
\end{proposition}

\begin{proof}
The slice condition makes \(p\) well defined, and \(p(y)=y\) for \(y\in Y\), so
\(p\) is onto. For \(y\in Y\),
\[
p^{-1}(y)=Hy,
\]
because \(p(x)=y\iff y\in Hx\iff x\in Hy\). Since the \(H\)-action is free,
\[
|p^{-1}(y)|=|Hy|=|H|.
\]
Let \(p(x)=y\), and choose \(h\in H\) with \(x=hy\). For \(g\in G\),
\[
gx=g(hy)=h(gy).
\]
By \(G\)-invariance, \(gy\in Y\), so the unique point of \(Y\cap H(gx)\) is
\(gy\). Hence
\[
p(gx)=gy=gp(x).
\]
Finally,
\[
gx=x\iff g(hy)=hy\iff h(gy)=hy\iff gy=y,
\]
so \(G_x=G_y=G_{p(x)}\). Thus \(p\) is a uniform stabilizer-preserving \(G\)-cover of degree \(|H|\).
\end{proof}

\begin{theorem}[Common dual under a uniform cover]\label{thm:common-dual-cover}
Let \(p:X\to Y\) be a uniform stabilizer-preserving \(G\)-cover of degree \(m\).
Then:
\begin{enumerate}[label=\textup{(\alph*)},itemsep=2pt]
\item \(X_g=p^{-1}(Y_g)\) and \(|X_g|=m|Y_g|\) for every \(g\in G\). Hence \(G_X^*=G_Y^*\).
\item On this common dual state space, \(Q_X=Q_Y\).
\item \(K_X(x,x')=m^{-1}K_Y(p(x),p(x'))\). Thus \(K_X\) strongly lumps by the fibers of \(p\) to \(K_Y\). Moreover, for the orbit-lumped kernels,
\[
\bar K_X([x],[x'])
=
\frac1m\bar K_Y([p(x)],[p(x')]).
\]
\item For \(t\ge1\),
\[
\|K_X^t(x,\cdot)-\pi_{K_X}\|_{\TV}
=
\|K_Y^t(p(x),\cdot)-\pi_{K_Y}\|_{\TV},
\qquad
d_{K_X}(t)=d_{K_Y}(t).
\]
\item \(\pi_{Q_X}=\pi_{Q_Y}\), and \(\pi_{K_X}(x)=m^{-1}\pi_{K_Y}(p(x))\).
\item \(\Spec_{\ne0}(K_X)=\Spec_{\ne0}(Q_X)=\Spec_{\ne0}(Q_Y)=\Spec_{\ne0}(K_Y)\), with matching algebraic multiplicities.
\end{enumerate}
\end{theorem}

\begin{proof}
If \(x\in X_g\), then \(gp(x)=p(gx)=p(x)\), so \(p(x)\in Y_g\). Conversely, if \(p(x)\in Y_g\), then \(g\in G_{p(x)}=G_x\), so \(x\in X_g\). Hence \(X_g=p^{-1}(Y_g)\), and \(|X_g|=m|Y_g|\). This gives \(G_X^*=G_Y^*\).

For \(g,h\) in the common dual state space,
\[
Q_X(g,h)
=\frac1{m|Y_g|}\sum_{y\in Y_g\cap Y_h}\sum_{x\in p^{-1}(y)}\frac1{|G_x|}
=\frac1{m|Y_g|}\sum_{y\in Y_g\cap Y_h}\sum_{x\in p^{-1}(y)}\frac1{|G_y|}
=\frac1{|Y_g|}\sum_{y\in Y_g\cap Y_h}\frac1{|G_y|}
=Q_Y(g,h),
\]
where the second equality uses \(p(x)=y\), hence \(G_x=G_{p(x)}=G_y\). Similarly,
\[
K_X(x,x')
=\frac1{|G_{p(x)}|}\sum_{g\in G_{p(x)}\cap G_{p(x')}}\frac1{m|Y_g|}
=\frac1mK_Y(p(x),p(x')).
\]
Since \(p\) is equivariant and \(G_{x'}=G_{p(x')}\), the restriction \(p|_{[x']}:[x']\to[p(x')]\) is a bijection. Summing the displayed identity over \(u\in[x']\) gives
\[
\bar K_X([x],[x'])
=
\frac1m\bar K_Y([p(x)],[p(x')]).
\]

By induction,
\[
K_X^t(x,u)=\frac1m K_Y^t(p(x),p(u))\qquad(t\ge1).
\]
Also, Burnside's lemma and \(|X_g|=m|Y_g|\) give
\[
|X/G|=\frac1{|G|}\sum_g |X_g|
=m\,\frac1{|G|}\sum_g |Y_g|
=m|Y/G|.
\]
Together with \(G_x=G_{p(x)}\), this gives
\[
\pi_{K_X}(x)=\frac1m\pi_{K_Y}(p(x)),
\qquad
\pi_{Q_X}=\pi_{Q_Y}.
\]
Summing over fibers gives
\[
\|K_X^t(x,\cdot)-\pi_{K_X}\|_{\TV}
=
\|K_Y^t(p(x),\cdot)-\pi_{K_Y}\|_{\TV}.
\]
Taking maxima over \(x\), and using that \(p\) is surjective, gives \(d_{K_X}(t)=d_{K_Y}(t)\). Finally, \(Q_X=Q_Y\) and Theorem~\ref{thm:shared-spectrum} give the spectral statement.
\end{proof}

\begin{corollary}[Fixed-point proportionality obstruction]\label{cor:common-dual-obstruction}
Assume \(G_X^*=G_Y^*\) and \(Q_X=Q_Y\). Then the fixed-point profiles are globally proportional: there is a single constant \(c>0\), independent of \(g\), such that
\[
|X_g|=c|Y_g|\qquad(g\in G_X^*).
\]
\end{corollary}

\begin{proof}
Since \(Q_X=Q_Y\) on the common state space and the dual chains are irreducible,
their stationary laws are equal. By Theorem~\ref{thm:universal-pi},
\[
\frac{|X_g|}{|G|\,|X/G|}
=
\frac{|Y_g|}{|G|\,|Y/G|}
\qquad(g\in G_X^*).
\]
Hence
\[
|X_g|=\frac{|X/G|}{|Y/G|}\,|Y_g|,
\]
with a constant independent of \(g\).
\end{proof}

\begin{corollary}[Equivariant bijection transfer]
\label{cor:equivariant-bijection-transfer}
Let \(G\) act on finite sets \(X\) and \(Y\). Suppose \(\phi:X\to Y\) is a \(G\)-equivariant bijection, i.e.,
\[
\phi(gx)=g\phi(x)
\qquad(g\in G,\ x\in X).
\]
Then, after identifying \(x\) with \(\phi(x)\),
\[
K_X=K_Y,
\qquad
Q_X=Q_Y.
\]
Hence the primal chains have the same stationary laws, spectra, and total variation distances, and the dual chains are identical.
\end{corollary}

\begin{proof}
For \(g\in G\),
\[
g\in G_x
\iff gx=x
\iff \phi(gx)=\phi(x)
\iff g\phi(x)=\phi(x)
\iff g\in G_{\phi(x)}.
\]
Thus \(\phi\) is a uniform stabilizer-preserving \(G\)-cover of degree \(1\).
The result follows from Theorem~\ref{thm:common-dual-cover}.
\end{proof}

We next apply Theorem~\ref{thm:common-dual-cover} in two examples.

\begin{example}[Parking functions and the Bose--Einstein dual] \label{ex:parking-BE-common-dual}
Let \(k=n+1\), identify \([k]\) with \(\mathbb Z/k\mathbb Z\), and view the classical parking functions \(\PF_n\) as a subset of \([k]^n\). Let \(S_n\) act by permuting coordinates, and let \(H=\mathbb Z/k\mathbb Z\) act by global shifts,
\[
a\cdot(x_1,\dots,x_n)=(x_1+a,\dots,x_n+a)\pmod{k}.
\]
The \(H\)-action is free, it commutes with the \(S_n\)-action, and \(\PF_n\) is \(S_n\)-invariant. By Pollak's circular parking argument, \(\PF_n\) meets each \(H\)-orbit in exactly one point~\cite{Stanley97PF}. Hence Proposition~\ref{prop:free-commuting-slice} gives a uniform stabilizer-preserving \(S_n\)-cover \([k]^n\to\PF_n\) of degree \(k\). Thus
\[
Q_{\PF_n}=Q_{[k]^n}.
\]
Also \(|(\PF_n)_\sigma| = k^{c(\sigma)-1}\), and the primal Burnside distances agree for \(t \ge 1\). Combining this transfer with Aldous's Bose--Einstein bound (as stated in \cite[Theorem~2]{Diaconis05}) recovers the parking-function estimate obtained in \cite{FP26}:
\[
d_{K_{\PF_n}}(t) \le n\left(1-\frac{1}{n+1}\right)^t
\qquad (t \ge 1).
\]
Thus the dual chain identifies the parking-function Burnside process with the Bose--Einstein dual at \(k=n+1\), giving the same dual transition matrix and the same nonzero spectral data, although the primal state spaces are different. This example records the common-dual mechanism behind the Bose--Einstein comparison in \cite{FP26}; the parking-function transition kernels, the lumped Catalan/Dyck-path chains, and the resulting \(O(n\log n)\) mixing and sampling consequences are developed there in detail.
\end{example}

\begin{example}[Zero-sum words]
\label{ex:zero-sum-common-dual}
Let \(m\ge2\), \(n\ge1\), and \(\gcd(m,n)=1\). Identify \([m]\) with \(\mathbb Z/m\mathbb Z\). Let \(G\le S_n\) act on \(X:=(\mathbb Z/m\mathbb Z)^n\) by permuting coordinates, and let \(H=\mathbb Z/m\mathbb Z\) act by global shifts as above. Set
\[
Y:=\left\{x\in X:\sum_{i=1}^n x_i=0\right\}.
\]
The \(H\)-action is free, commutes with the \(G\)-action, and \(Y\) is \(G\)-invariant. Each \(H\)-orbit meets \(Y\) exactly once: for \(x\in X\), the required shift \(c\) satisfies
\[
\sum_i(x_i+c)=\sum_i x_i+nc=0,
\]
and \(n\) is invertible modulo \(m\).
Hence Proposition~\ref{prop:free-commuting-slice} gives a uniform stabilizer-preserving \(G\)-cover \(X\to Y\) of degree \(m\). Therefore Theorem~\ref{thm:common-dual-cover} gives
\[
Q_X=Q_Y.
\]

If \(G=S_n\) and \(c(\sigma)\) is the number of cycles of \(\sigma\), then
\[
|X_\sigma|=m^{c(\sigma)}.
\]
Since the cover has degree \(m\), Theorem~\ref{thm:common-dual-cover} gives
\[
|Y_\sigma|=m^{c(\sigma)-1}.
\]
\end{example}

\section{The Value-Permutation Model}\label{sec:value}

This section studies \(S_k\curvearrowright[k]^n\), where \(S_k\) relabels symbols. For \(k\ge2\), the model-free fixed-set quotient becomes especially small here: \(X_g=\Fix(g)^n\), so the quotient is indexed by fixed-symbol sets and has size \(2^k-k-1\), independent of \(n\).
We also give closed forms, a coarser fixed-point-count quotient, and the mixing time bounds.

\subsection{Setting and Basic Properties}

Let \(G=S_k\) act on \(X=[k]^n\) by
\[
(g\cdot x)_i=g(x_i).
\]
For \(g\in S_k\), write
\[
\Fix(g):=\{a\in[k]:g(a)=a\},
\qquad
f(g):=|\Fix(g)|.
\]
For \(x\in[k]^n\), write
\[
\supp(x):=\{x_1,\dots,x_n\},
\qquad
r_x:=|\supp(x)|.
\]

\begin{lemma}[Basic identities]\label{lem:value-basic}
For \(g\in S_k\) and \(x\in[k]^n\),
\[
X_g=\Fix(g)^n,
\qquad
|X_g|=f(g)^n,
\]
\[
G_x=\{\sigma\in S_k:\sigma(a)=a\ \forall a\in\supp(x)\}\cong S_{k-r_x},
\qquad
|G_x|=(k-r_x)!.
\]
Also,
\begin{equation}\label{eq:value-orbit-count}
|[k]^n/S_k|=\sum_{r=0}^{\min\{k,n\}}S(n,r),
\end{equation}
where \(S(n,r)\) is the Stirling number of the second kind.
For \(g,h\in S_k^*\), the dual kernel is
\[
Q(g,h)=\frac1{f(g)^n}\sum_{x\in X_g\cap X_h}\frac1{(k-r_x)!}.
\]
\end{lemma}

\begin{proof}
The condition \(g\cdot x=x\) is equivalent to \(x_i\in\Fix(g)\) for all \(i\).
The condition \(\sigma\cdot x=x\) is equivalent to fixing every symbol in \(\supp(x)\) pointwise and permuting the \(k-r_x\) unused symbols freely. Finally, two words are in the same orbit exactly when they induce the same set partition of \([n]\) into value-classes.
\end{proof}

Write \(!m\) for the number of derangements of \(m\) symbols, with \(!0=1\). The dual state space is
\[
S_k^*=\{g\in S_k:f(g)>0\}=S_k\setminus\{\text{derangements}\},
\]
with size \(k!-!\,k\). As \(k\to\infty\), the fraction of derangements in \(S_k\) tends to \(e^{-1}\).
Thus \(S_k^*\) still has asymptotic density \(1-e^{-1}\) in \(S_k\).

\subsection{Fixed-Symbol-Set Closed Forms}\label{sec:value-canonical}

For \(g,h\in S_k^*\), set
\[
a:=f(g),
\qquad
j:=|\Fix(g)\cap\Fix(h)|.
\]
Then
\[
X_g\cap X_h
=
(\Fix(g)\cap\Fix(h))^n,
\]
so \(|X_g\cap X_h|=j^n\).

\begin{lemma}[Words with exactly \(r\) symbols]\label{lem:stirl}
For \(1\le r\le\min\{j,n\}\),
\[
|\{x\in[j]^n:r_x=r\}|=\binom jrS(n,r)r!.
\]
\end{lemma}

\begin{proof}
Choose the \(r\) symbols used by the word: \(\binom jr\) choices. Partition the \(n\) positions into \(r\) nonempty blocks: \(S(n,r)\) choices. Assign the chosen symbols bijectively to the blocks: \(r!\) choices. Multiplying gives
\[
|\{x\in[j]^n:r_x=r\}|=\binom jr S(n,r)r!. \qedhere
\]
\end{proof}

\begin{theorem}[Canonical closed forms for \(Q(g,h)\)]\label{thm:value-forms}
We have
\begin{align}
Q(g,h)
&=\frac1{a^n}\sum_{r=1}^{\min\{j,n\}}
\binom jrS(n,r)\frac{r!}{(k-r)!},\label{eq:value-stirling}\\
&=\left(\frac ja\right)^n
\E[(k-R_{j,n})!^{-1}],\label{eq:value-expectation}\\
&=\frac{n!}{a^n}[u^n][z^k]e^z(1+z(e^u-1))^j.\label{eq:value-coeff}
\end{align}
Here, for \(j\ge1\), \(R_{j,n}\) is the number of distinct symbols used by a
uniform word in \([j]^n\):
\[
\Prob(R_{j,n}=r)=\frac{\binom jrS(n,r)r!}{j^n}.
\]
If \(j=0\), all formulas are interpreted as \(0\).
\end{theorem}

\begin{proof}
If \(j=0\), then \(X_g\cap X_h=\varnothing\). Assume \(j\ge1\). Then
\[
Q(g,h)=\frac1{a^n}\sum_{x\in[j]^n}\frac1{(k-r_x)!}.
\]
Apply Lemma~\ref{lemma:reindexing} to
\[
\Phi:[j]^n\to\{1,\dots,\min\{j,n\}\},
\qquad
\Phi(x)=r_x.
\]
By Lemma~\ref{lem:stirl}, \(|\Phi^{-1}(r)|=\binom jrS(n,r)r!\), giving
\eqref{eq:value-stirling}. Since
\[
\binom jrS(n,r)r!=j^n\Prob(R_{j,n}=r),
\]
\eqref{eq:value-stirling} becomes
\[
Q(g,h)
=
\frac{j^n}{a^n}\sum_r \Prob(R_{j,n}=r)(k-r)!^{-1}
=
\left(\frac ja\right)^n
\E[(k-R_{j,n})!^{-1}],
\]
which is \eqref{eq:value-expectation}. For \eqref{eq:value-coeff}, use
\[
r!S(n,r)=n![u^n](e^u-1)^r,
\qquad
\frac1{(k-r)!}=[z^k]z^re^z,
\]
and the binomial theorem.
\end{proof}

\begin{example}[Identity start]
If \(g=e\) and \(f:=f(h)\), then \(a=k\), \(j=f\). If \(f=1\),
\[
Q(e,h)=\frac1{k^n(k-1)!}.
\]
If \(f=2\),
\[
Q(e,h)=\frac{2}{k^n}\left(\frac1{(k-1)!}+\frac{2^{n-1}-1}{(k-2)!}\right).
\]
\end{example}

\subsection{Stationary Distribution}\label{subsec:sta-value}

\begin{theorem}[Dual stationary law in the value model]\label{thm:stationary}
For \(S_k\curvearrowright[k]^n\),
\[
\pi_Q(g)=\frac{f(g)^n}{k!Z_{k,n}},
\qquad
Z_{k,n}:=\sum_{r=0}^{\min\{k,n\}}S(n,r).
\]
If \(k\ge n\), then \(Z_{k,n}=B_n\), the \(n\)th Bell number.
\end{theorem}

\begin{proof}
Use Theorem~\ref{thm:universal-pi} and Lemma~\ref{lem:value-basic}.
\end{proof}

On the other hand, for the primal chain,
\[
\pi_K(x)=\frac{(k-r_x)!}{k!Z_{k,n}}.
\]

\begin{lemma}[Extrema of the stationary laws]\label{lem:val-ext}
For \(n\ge1\),
\[
\pi_{K,\max}=\frac1{kZ_{k,n}},
\qquad
\pi_{K,\min}=\frac{(k-r_{\max})!}{k!Z_{k,n}},
\qquad
r_{\max}:=\min\{k,n\}.
\]
For \(n\ge1\) and \(k\ge3\),
\[
\pi_{Q,\min}=\frac1{k!Z_{k,n}},
\qquad
\pi_{Q,\max}=\frac{k^n}{k!Z_{k,n}},
\qquad
\frac{\pi_{Q,\max}}{\pi_{Q,\min}}=k^n.
\]
For \(k=1,2\), \(S_k^*=\{e\}\).
\end{lemma}

\begin{proof}
Since \(\pi_K(x)\propto(k-r_x)!\), the maximum occurs at \(r_x=1\) and the minimum at \(r_x=r_{\max}\). Since \(\pi_Q(g)\propto f(g)^n\), for \(k\ge3\) the minimum positive value is \(f(g)=1\), and the maximum is \(f(e)=k\).
\end{proof}

\begin{remark}[Cycle-index check]
For cycle-index methods beyond symmetric groups, see Fulman~\cite{FulmanCycleIndex}. The cycle-index identity \cite[page~412]{Diaconis05}
\[
\sum_{k\ge0}P_k(x_1,\dots,x_k)z^k=
\exp\left(\sum_{j\ge1}\frac{x_jz^j}{j}\right)
\]
gives
\[
F_k(x):=\sum_{g\in S_k}x^{f(g)}=k![z^k]\frac{e^{(x-1)z}}{1-z}
=k!\sum_{m=0}^k\frac{(x-1)^m}{m!}.
\]
Hence
\[
\sum_{g\in S_k}f(g)^n=\left.(x\tfrac{d}{dx})^nF_k(x)\right|_{x=1}
=k!\sum_{m=0}^kS(n,m),
\]
which matches the normalization above. Also, for uniform \(g\in S_k\) and \(k\ge2\), \(\E f(g)=1\) and \(\Var(f(g))=1\).
\end{remark}

\subsection{Fixed-Set and Fixed-Point-Count Quotients}\label{subsec:fixed-set-and-count-quotients}

The exact quotient remembers \(\Fix(g)\subseteq[k]\); it preserves the entire nonzero spectrum. The coarser quotient remembers only \(f(g)=|\Fix(g)|\); it is smaller but may lose eigenvalues. Assume \(k\ge2\), and write
\[
I_k:=\{1,2,\ldots,k\}\setminus\{k-1\}.
\]

\subsubsection{The exact fixed-symbol-set quotient}

The fixed-set quotient of Theorem~\ref{thm:exact-fix-stab-pair} is explicit here. Since \(X_g=\Fix(g)^n\), fixed sets are indexed by fixed-symbol sets. For
\(S\subseteq[k]\), set
\[
E_S:=\{g\in S_k^*:\Fix(g)=S\}.
\]
Then \(E_S\ne\varnothing\) iff \(S\ne\varnothing\) and \(|S|\ne k-1\), and in that case
\[
|E_S|=!\,(k-|S|).
\]
Let
\[
\mathcal B_k:=\{S\subseteq[k]:S\ne\varnothing,\ |S|\ne k-1\}.
\]
For \(k\ge2\),
\[
|\mathcal B_k|=2^k-k-1.
\]

\begin{corollary}[Exact fixed-symbol-set quotient]
\label{thm:exact-fixed-set-quotient}
For \(S,T\in\mathcal B_k\), set \(j:=|S\cap T|\). Then
\begin{align}
\bar Q_n^{\mathrm{fs}}(S,T)
&=
!\,(k-|T|)\frac1{|S|^n}
\sum_{m=1}^{\min\{j,n\}}
\binom jm S(n,m)\frac{m!}{(k-m)!},
\label{eq:fixed-set-q-single}\\
&=
!\,(k-|T|)
\left(\frac{j}{|S|}\right)^n
\E[(k-R_{j,n})!^{-1}],
\label{eq:fixed-set-q-expect}\\
&=
!\,(k-|T|)
\frac{n!}{|S|^n}
[u^n][z^k]e^z(1+z(e^u-1))^j.
\label{eq:fixed-set-q-coeff}
\end{align}
The second formula is interpreted as \(0\) when \(j=0\). The stationary law is
\[
\bar\pi_{Q,n}^{\mathrm{fs}}(S)=
\frac{!\,(k-|S|)|S|^n}{k!Z_{k,n}}.
\]
Moreover,
\[
\Spec_{\ne0}(K_n)=\Spec_{\ne0}(Q_n)
=
\Spec_{\ne0}(\bar Q_n^{\mathrm{fs}}),
\qquad
\rank(K_n)=\rank(Q_n)\le 2^k-k-1,
\]
with matching algebraic multiplicities, and
\[
\det(\lambda I_{S_k^*}-Q_n)
=
\lambda^{|S_k^*|-|\mathcal B_k|}
\det(\lambda I_{\mathcal B_k}-\bar Q_n^{\mathrm{fs}}).
\]
If \(g\in E_S\), then for \(t\ge1\),
\[
d_Q(g,t)=d_{\bar Q_n^{\mathrm{fs}}}(S,t),
\qquad
d_Q(t)=\max_{S\in\mathcal B_k}d_{\bar Q_n^{\mathrm{fs}}}(S,t).
\]
\end{corollary}

\begin{proof}
For \(g\in E_S\) and \(h\in E_T\), since \(|E_T|=!\,(k-|T|)\), Theorem~\ref{thm:exact-fix-stab-pair} gives
\[
\bar Q_n^{\mathrm{fs}}(S,T)=|E_T|Q_n(g,h).
\]
Substituting the three formulas for \(Q_n(g,h)\) from Theorem~\ref{thm:value-forms} gives \eqref{eq:fixed-set-q-single}--\eqref{eq:fixed-set-q-coeff}.

The stationary law is the pushforward of
\[
\pi_Q(g)=\frac{f(g)^n}{k!Z_{k,n}}.
\]
All spectral, determinant, rank, and TV claims are the corresponding conclusions of Theorem~\ref{thm:exact-fix-stab-pair}.
\end{proof}

\begin{corollary}[Fixed-\(k\) asymptotic spectrum]
\label{cor:fixed-k-asymptotic-spectrum}
Fix \(k\ge3\). As \(n\to\infty\),
\[
\bar Q_n^{\mathrm{fs}}(S,T)
\longrightarrow
\bar Q_\infty^{\mathrm{fs}}(S,T):=
\frac{!\,(k-|T|)}{(k-|S|)!}\ind_{\{S\subseteq T\}}.
\]
Ordering \(\mathcal B_k\) so that \(S\) appears before \(T\) whenever \(S\subsetneq T\), the limit is upper triangular with diagonal entries
\[
\theta_{k,|S|}:=\frac{!\,(k-|S|)}{(k-|S|)!}.
\]
Thus, as multisets,
\[
\Spec(\bar Q_n^{\mathrm{fs}})
\to
\left\{
\theta_{k,r}\text{ with multiplicity }\binom kr:
r\in\{1,\dots,k-2,k\}
\right\}.
\]
Since the limiting eigenvalues are nonzero, \(\bar Q_n^{\mathrm{fs}}\) is nonsingular for all large \(n\). Consequently,
\[
\lambda_1(K_n)=\lambda_1(Q_n)\to\frac12,
\qquad
t_{\mathrm{rel}}(K_n)=t_{\mathrm{rel}}(Q_n)\to2.
\]
\end{corollary}

\begin{proof}
For \(g\in E_S\) and \(h\in E_T\), set \(q_n(S,T):=Q_n(g,h)\). By Theorem~\ref{thm:value-forms},
\[
q_n(S,T)
=
\left(\frac{|S\cap T|}{|S|}\right)^n
\E[(k-R_{|S\cap T|,n})!^{-1}].
\]
If \(S\nsubseteq T\), then \(|S\cap T|<|S|\). Since \((k-R_{|S\cap T|,n})!^{-1}\le1\), we get
\[
q_n(S,T)\le \left(\frac{|S\cap T|}{|S|}\right)^n\to0.
\]
If \(S\subseteq T\), then \(|S\cap T|=|S|\). Let \(r:=|S|\). A uniform word in \([r]^n\) uses all \(r\) symbols with probability tending to \(1\), since
\[
\Prob(R_{r,n}<r)\le r\left(1-\frac1r\right)^n\to0.
\]
Thus \(R_{r,n}\to r\) in probability, and bounded convergence gives
\[
q_n(S,T)\to \frac1{(k-r)!}=\frac1{(k-|S|)!}.
\]
Multiplying by \(!\,(k-|T|)\) gives
\[
\bar Q_n^{\mathrm{fs}}(S,T)
\to
\frac{!\,(k-|T|)}{(k-|S|)!}\mathbf 1_{\{S\subseteq T\}}.
\]
The limiting matrix is upper triangular when \(\mathcal B_k\) is ordered by inclusion, with diagonal entries \(\theta_{k,|S|}\). Hence the eigenvalues, counted with algebraic multiplicity, converge to these diagonal entries. The unique trivial limiting eigenvalue is \(\theta_{k,k}=1\). The largest nontrivial one is
\[
\theta_{k,k-2}=\frac{!2}{2!}=\frac12,
\]
and \(!m/m!<1/2\) for \(m\ge3\). Therefore
\[
\lambda_1(K_n)=\lambda_1(Q_n)\to\frac12,
\qquad
t_{\mathrm{rel}}(K_n)=t_{\mathrm{rel}}(Q_n)\to2
\]
by Corollary~\ref{cor:gap-equal}.
\end{proof}

\subsubsection{The coarser fixed-point-count quotient}

Assume \(k\ge2\). Recall \(I_k:=\{1,\dots,k\}\setminus\{k-1\}\). For
\(s\in I_k\), set
\[
C_s:=\{g\in S_k^*:f(g)=s\},
\qquad
\theta_{k,s}:=\frac{!\,(k-s)}{(k-s)!}.
\]

\begin{lemma}[Fixed-point counts]\label{lem:fixedpoint-counts}
For \(s\in I_k\):
\begin{enumerate}[label=\textup{(\alph*)},itemsep=2pt]
\item\[
|C_s|=\binom{k}{s}\cdot\,!\,(k-s).
\]
\item If \(f(g)=r\), then
\[
|\{h\in C_s:|\Fix(g)\cap\Fix(h)|=j\}|
=
\binom rj\binom{k-r}{s-j}\cdot\,!\,(k-s).
\]
\item If \(x\in[k]^n\), then
\[
G_x=\{h\in S_k:\supp(x)\subseteq\Fix(h)\},
\qquad
|G_x\cap C_s|=\binom{k-r_x}{s-r_x}\cdot\,!\,(k-s).
\]
\end{enumerate}
\end{lemma}

\begin{proof}
For \textup{(a)}, choose the \(s\) fixed symbols and derange the remaining \(k-s\) symbols.

For \textup{(b)}, let \(F:=\Fix(g)\), \(|F|=r\). Choose \(j\) fixed symbols in \(F\), choose the remaining \(s-j\) fixed symbols in \(F^c\), and derange the remaining \(k-s\) symbols.

For \textup{(c)}, \(h\in G_x\) iff \(h\) fixes every symbol in \(\supp(x)\).
Thus the \(r_x\) symbols in \(\supp(x)\) are forced fixed; choose the remaining \(s-r_x\) fixed symbols from the other \(k-r_x\) symbols, then derange the remaining \(k-s\). We use the convention \(\binom ab=0\) when \(b\notin
\{0,\dots,a\}\).
\end{proof}

\begin{theorem}[Fixed-count/support-count quotient pair]
\label{thm:fixed-point-count-lumping}
Let
\[
P_m:=\{x\in[k]^n:r_x=m\},
\qquad
m\in\mathcal M_{k,n}:=\{1,\dots,\min\{k,n\}\}.
\]
The pair \(\{C_s\}\), \(\{P_m\}\) is compatible. Define
\[
\bar A^{\#}(r,m):=\frac{\binom rmS(n,m)m!}{r^n},
\qquad
\bar B^{\#}(m,s):=\frac{\binom{k-m}{s-m}\cdot\,!\,(k-s)}{(k-m)!}.
\]
Then
\[
\bar Q^{\#}=\bar A^{\#}\bar B^{\#},
\qquad
\bar K^{\#}=\bar B^{\#}\bar A^{\#},
\]
and
\[
\Spec_{\ne0}(\bar Q^{\#})=\Spec_{\ne0}(\bar K^{\#}),
\qquad
\Spec(\bar Q^{\#})\subseteq\Spec(Q_n),
\qquad
\Spec(\bar K^{\#})\subseteq\Spec(K_n).
\]

For \(r,s\in I_k\),
\begin{align}
\bar Q^{\#}(r,s)
&=
\theta_{k,s}\frac1{r^n}
\sum_{m=1}^{\min\{r,s,n\}}
\binom rmS(n,m)\frac{m!}{(s-m)!},
\label{eq:Qbar-from-Q-single}\\
&=
\theta_{k,s}\E[(s-R_{r,n})!^{-1}],
\label{eq:Qbar-from-Q-expect}\\
&=
\theta_{k,s}\frac{n!}{r^n}[u^n][z^s]e^z(1+z(e^u-1))^r.
\label{eq:Qbar-from-Q-coeff}
\end{align}
The stationary laws are
\[
\bar\pi_Q^{\#}(s)
=
\frac{\binom{k}{s}\cdot\,!\,(k-s)s^n}{k!Z_{k,n}}
=
\frac{\theta_{k,s}}{Z_{k,n}}\frac{s^n}{s!},
\qquad
\bar\pi_K^{\#}(m)=\frac{S(n,m)}{Z_{k,n}}.
\]
\end{theorem}

\begin{proof}
For \(g\in C_r\) and \(x\in P_m\), Lemmas~\ref{lem:stirl} and \ref{lem:fixedpoint-counts} give
\[
\bar A^{\#}(r,m)=\frac{|X_g\cap P_m|}{|X_g|}
=\frac{\binom rmS(n,m)m!}{r^n},
\qquad
\bar B^{\#}(m,s)=\frac{|G_x\cap C_s|}{|G_x|}
=\frac{\binom{k-m}{s-m}\cdot\,!\,(k-s)}{(k-m)!}.
\]
Thus the pair is compatible. The factorization and nonzero spectral equality follow from Theorem~\ref{thm:paired-lumpings-general}; the spectral inclusions follow from Proposition~\ref{prop:lumping-facts}.

Since
\[
\bar B^{\#}(m,s)
=
\frac{!\,(k-s)}{(k-s)!}\frac1{(s-m)!}
=
\theta_{k,s}\frac1{(s-m)!},
\]
the identity \(\bar Q^{\#}=\bar A^{\#}\bar B^{\#}\) gives
\[
\bar Q^{\#}(r,s)
=
\theta_{k,s}\frac1{r^n}
\sum_{m=1}^{\min\{r,s,n\}}
\binom rmS(n,m)\frac{m!}{(s-m)!}.
\]
The expectation form follows from
\[
\Prob(R_{r,n}=m)=\frac{\binom rmS(n,m)m!}{r^n},
\]
and the coefficient form follows from
\[
m!S(n,m)=n![u^n](e^u-1)^m,
\qquad
(s-m)!^{-1}=[z^{s-m}]e^z.
\]
Finally, the stationary laws are pushforwards of \(\pi_Q(g)=f(g)^n/(k!Z_{k,n})\) and \(\pi_K(x)=(k-r_x)!/(k!Z_{k,n})\), using
\[
|C_s|=\binom{k}{s}\cdot\,!\,(k-s),
\qquad
|P_m|=\binom kmS(n,m)m!. \qedhere
\]
\end{proof}

\begin{corollary}[Fixed-count quotient as \(n\to\infty\)]
\label{cor:fixed-count-asymptotic}
For fixed \(k\ge3\),
\[
\bar Q^{\#}(r,s)\longrightarrow
\bar Q^{\#}_\infty(r,s):=
\begin{cases}
\displaystyle \theta_{k,s}(s-r)!^{-1},&s\ge r,\\[3pt]
0,&s<r,
\end{cases}
\qquad r,s\in I_k.
\]
Thus, ordered by increasing \(r\), \(\bar Q^{\#}_\infty\) is upper triangular with eigenvalues
\[
\theta_{k,r}=\frac{!\,(k-r)}{(k-r)!},
\qquad r\in I_k.
\]
The largest nontrivial limiting eigenvalue is \(1/2\).
\end{corollary}

\begin{proof}
For fixed \(r\),
\[
\Prob(R_{r,n}<r)\le r\left(1-\frac1r\right)^n\to0,
\]
so \(R_{r,n}\to r\) in probability. Since \((s-R_{r,n})!^{-1}\) is bounded and is interpreted as \(0\) when \(R_{r,n}>s\), bounded convergence in \eqref{eq:Qbar-from-Q-expect} gives
\[
\bar Q^{\#}(r,s)\to \theta_{k,s}(s-r)!^{-1},
\]
with value \(0\) when \(s<r\). The limiting matrix is upper triangular, so its eigenvalues are its diagonal entries. The largest nontrivial one is \(!2/2!=1/2\), and \(!m/m!<1/2\) for \(m\ge3\).
\end{proof}

\begin{example}[\(k=3,n=2\)]
Here \(C_3=\{e\}\) and \(C_1=\{(12),(13),(23)\}\). Since the four elements of \(S_3^*\) have distinct fixed sets, the exact fixed-set quotient is the full dual chain, with
\[
Q_n=
\bordermatrix{
      & e & (12) & (13) & (23) \cr
e     & \frac56 & \frac1{18} & \frac1{18} & \frac1{18} \cr
(12)  & \frac12 & \frac12    & 0          & 0          \cr
(13)  & \frac12 & 0          & \frac12    & 0          \cr
(23)  & \frac12 & 0          & 0          & \frac12
},
\qquad
\Spec(Q_n)=\left\{1,\frac12,\frac12,\frac13\right\}.
\]
The coarser fixed-point-count quotient is
\[
\bar Q^{\#}=
\bordermatrix{
      & C_3 & C_1 \cr
C_3   & \frac56 & \frac16 \cr
C_1   & \frac12 & \frac12
},
\qquad
\Spec(\bar Q^{\#})=\left\{1,\frac13\right\}.
\]
Thus the fixed-set quotient is spectral-complete, while the fixed-point-count quotient loses the two transverse \(1/2\)-eigenvectors.
\end{example}

\subsection{Mixing in the Value Model}

The elementary pointwise floor behind the universal bound is especially simple in the value model. By Lemma~\ref{lem:value-basic}, \(|G_x|=(k-r_x)!\), so
\begin{equation}\label{eq:maxGx}
M:=\max_x |G_x|=(k-1)!.
\end{equation}
For all \(u,v\in[k]^n\),
\[
K(u,v)\ge \frac1{|G_u|\,|X_e|}
\ge
\frac1{(k-1)!k^n},
\]
equivalently,
\[
K(u,\cdot)\ge \frac1{(k-1)!}\Unif([k]^n)(\cdot).
\]
If \(n\ge k-1\), this pointwise floor is sharp: take \(u=1^n\) and take \(v\) using all symbols \(2,\dots,k\). Paguyo~\cite[Lemma~4.4, Theorem~4.5]{Paguyo22} obtains this floor and the corresponding \(n\)-independent minorization bound for \(K\) in the \(k<n\) regime; Theorem~\ref{thm:value-mixing-summary}\textup{(a)} gives the same type of bound for both \(K\) and \(Q\).

For \(k\ge n\), Paguyo's coupling bound \cite[Theorem~1.1]{Paguyo22} and spectral estimate \cite[Proposition~4.3]{Paguyo22} for the primal chain transfer to the dual chain with the one-step loss from Corollary~\ref{lem:TV-step}.

\begin{theorem}[Value-model mixing bounds]\label{thm:value-mixing-summary}
For \(S_k\curvearrowright[k]^n\), \(k\ge2\), the following hold.

\begin{enumerate}[label=\textup{(\alph*)},itemsep=2pt]

\item For \(t\ge1\),
\[
d_Q(t),d_K(t)\le \left(1-\frac1{(k-1)!}\right)^t,
\]
and
\[
t_{\mix}(Q;\varepsilon),\ t_{\mix}(K;\varepsilon)
\le
\left\lceil (k-1)!\log\frac1\varepsilon\right\rceil .
\]

\item If \(k\ge n\), then
\begin{equation}\label{eq:value-Q-Paguyo}
d_Q(t)\le n\left(1-\frac1{2k}\right)^{t-1}
\qquad(t\ge1),
\end{equation}
and
\[
t_{\mix}(Q;\varepsilon)
\le
1+\left\lceil 2k\log\frac n\varepsilon\right\rceil .
\]

\item If \(k\ge n\), then
\[
\lambda_1(Q)=\lambda_1(K)\le 1-\frac1{2k}.
\]
Equivalently,
\[
\gap(Q)=\gap(K)\ge\frac1{2k},
\qquad
t_{\mathrm{rel}}(Q)=t_{\mathrm{rel}}(K)\le2k.
\]
\end{enumerate}
\end{theorem}

\begin{proof}
Part \textup{(a)} follows from \eqref{eq:maxGx} and Theorem~\ref{cor:universal-mix}.

Part \textup{(b)} follows from Paguyo's bound
\[
d_K(t)\le n\left(1-\frac1{2k}\right)^t
\qquad(k\ge n)
\]
\cite[Theorem~1.1]{Paguyo22} and Corollary~\ref{lem:TV-step}. 

Part \textup{(c)} follows from Corollary~\ref{cor:gap-equal} and Paguyo's spectral bound \cite[Proposition~4.3]{Paguyo22}.
\end{proof}

\section{The Coordinate-Permutation Model}\label{sec:coordinate}

This section studies \(S_n\curvearrowright[k]^n\), where \(S_n\) permutes coordinates. The transition \(Q(g,h)\) is governed by the joint orbits of \(\langle g,h\rangle\) on \([n]\). We derive coloring formulas, identify \(\pi_Q\) with Ewens\((k)\), and specialize binary spectral consequences.

\subsection{Setting and Basic Properties}\label{sec:coordinate-basic}

Let \(G=S_n\) act on \(X=[k]^n\) by
\[
(g\cdot x)_i=x_{g^{-1}(i)}.
\]
For \(x\in[k]^n\), define the index sets, multiplicities, and histogram by
\[
I_a(x):=\{i\in[n]:x_i=a\},
\qquad
m_a(x):=|I_a(x)|,
\qquad
\mathbf m(x):=(m_1(x),\dots,m_k(x)).
\]
Thus \(\sum_{a=1}^k m_a(x)=n\). For \(k=2\), writing the alphabet as \(\{0,1\}\), the Hamming weight is
\[
w(x):=m_1(x)=|\{i:x_i=1\}|.
\]
For \(g\in S_n\), let \(c(g)\) be the total number of cycles.

\begin{lemma}[Basic identities]\label{lem:coord-basic}
For \(g\in S_n\) and \(x\in[k]^n\),
\[
\begin{gathered}
|X_g|=k^{c(g)},\qquad G^*=S_n,\qquad
|[k]^n/S_n|=\binom{n+k-1}{k-1},\\
G_x\cong S_{m_1(x)}\times\cdots\times S_{m_k(x)},
\qquad
|G_x|=\prod_{a=1}^km_a(x)!.
\end{gathered}
\]
Consequently,
\[
Q(g,h)=\frac1{k^{c(g)}}\sum_{x\in X_g\cap X_h}
\frac1{\prod_{a=1}^km_a(x)!}.
\]
\end{lemma}

\begin{proof}
A word fixed by \(g\) is constant on each cycle of \(g\), giving \(k\) choices per cycle. The stabilizer of \(x\) freely permutes coordinates inside each level set \(I_a(x)\). Finally, two words lie in the same orbit iff they have the same histogram, and histograms are counted by stars and bars.
\end{proof}

\subsection{Joint-Orbit Closed Forms}\label{sec:coord-canonical}

For \(g,h\in S_n\), let \(H:=\langle g,h\rangle\), and let \(O_1,\dots,O_s\) be the \(H\)-orbits on \([n]\), with sizes
\[
b_j:=|O_j|,
\qquad
\sum_{j=1}^sb_j=n.
\]
For \(H\le S_n\), write the common fixed set of \(H\) as
\[
X_H:=\{x\in[k]^n:\sigma\cdot x=x\ \text{for all }\sigma\in H\}
=\bigcap_{\sigma\in H}X_\sigma.
\]
If \(H\) has orbits \(O_1,\dots,O_s\) on \([n]\), then
\[
X_H=\{x\in[k]^n:x\text{ is constant on each }O_i\}.
\]

\begin{lemma}[Joint constancy]\label{lem:joint-constancy}
For \(H=\langle g,h\rangle\),
\[
X_g\cap X_h=X_H.
\]
Consequently, if \(H\) has \(s\) orbits on \([n]\), then
\[
|X_g\cap X_h|=|X_H|=k^s.
\]
\end{lemma}

\begin{proof}
We have
\[
x\in X_g\cap X_h
\iff g\cdot x=x,\ h\cdot x=x
\iff \sigma\cdot x=x\ \text{for all }\sigma\in\langle g,h\rangle.
\]
Thus \(X_g\cap X_h=X_H\). In the coordinate action, \(x\in X_H\) iff \(x\) is constant on each \(H\)-orbit. Each orbit has \(k\) possible symbols, so \(|X_H|=k^s\).
\end{proof}

A coloring \(\phi:[s]\to[k]\) assigns a symbol to each orbit. The map
\[
\Phi:X_H\to[k]^{[s]},
\qquad
\Phi(x)(j):=x_i\quad(i\in O_j),
\]
is a bijection, with inverse \(x(\phi)\) defined by \(x_i=\phi(j)\) for \(i\in O_j\). Define the orbit-mass counts
\[
M_a(\phi):=\sum_{j:\phi(j)=a}b_j,
\qquad a=1,\dots,k.
\]
Then
\[
m_a(x(\phi))=M_a(\phi).
\]

\begin{theorem}[Closed forms for \(Q(g,h)\)]\label{thm:Qgh-canonical}
With the notation above,
\begin{align}
Q(g,h)
&=\frac1{k^{c(g)}}\sum_{\phi:[s]\to[k]}
\prod_{a=1}^k\frac1{M_a(\phi)!},\label{eq:Q-colorings}\\
&=\frac{k^{-c(g)}}{n!}\sum_{\phi:[s]\to[k]}
\binom n{M_1(\phi),\dots,M_k(\phi)},\label{eq:Q-multinomial}\\
&=k^{s-c(g)}\E\left[\prod_{a=1}^k\frac1{M_a!}\right],\label{eq:Q-expectation}
\end{align}
where \(M_a=\sum_{j=1}^sb_j\ind_{\{U_j=a\}}\) and \(U_1,\dots,U_s\) are i.i.d. uniform on \([k]\). Also,
\begin{align}
Q(g,h)
&=\frac1{k^{c(g)}}[z_1^0\cdots z_k^0]
\exp\left(\sum_{a=1}^kz_a\right)
\prod_{j=1}^s\sum_{a=1}^kz_a^{-b_j},\label{eq:Q-CT-prime}\\
&=k^{-c(g)}[z_1^0\cdots z_k^0]
\frac{(z_1+\cdots+z_k)^n}{n!}
\prod_{j=1}^s\sum_{a=1}^kz_a^{-b_j}.\label{eq:Q-CT}
\end{align}
\end{theorem}

\begin{proof}
By Lemma~\ref{lem:joint-constancy},
\[
Q(g,h)=\frac1{k^{c(g)}}\sum_{x\in X_H}\frac1{\prod_{a=1}^km_a(x)!}.
\]
Apply Lemma~\ref{lemma:reindexing} with
\[
A=X_H,\qquad B=[k]^{[s]},\qquad \Phi:X_H\to[k]^{[s]}.
\]
Since \(\Phi\) is a bijection and \(m_a(x(\phi))=M_a(\phi)\), we get \eqref{eq:Q-colorings}. Using $\prod_a m_a! = n!/\binom{n}{m_1,\dots,m_k}$ gives \eqref{eq:Q-multinomial}. 

If \(U_1,\dots,U_s\) are i.i.d. uniform on \([k]\), then the random vector \((M_1,\dots,M_k)\) has the same law as \((M_1(\phi),\dots,M_k(\phi))\) for a uniform coloring \(\phi:[s]\to[k]\). Hence
\[
\frac1{k^s}\sum_{\phi:[s]\to[k]}\prod_{a=1}^k\frac1{M_a(\phi)!}
=
\E\left[\prod_{a=1}^k\frac1{M_a!}\right],
\]
which gives \eqref{eq:Q-expectation}.

For the coefficient forms, first note that
\[
\sum_{\phi:[s]\to[k]}z_1^{-M_1(\phi)}\cdots z_k^{-M_k(\phi)}
=
\prod_{j=1}^s\sum_{a=1}^kz_a^{-b_j}.
\]
Using \(1/m!=[z^m]e^z\) in \eqref{eq:Q-colorings},
\[
\begin{aligned}
Q(g,h)
&=
\frac1{k^{c(g)}}\sum_{\phi}
[z_1^{M_1(\phi)}\cdots z_k^{M_k(\phi)}]
\exp\!\left(\sum_{a=1}^kz_a\right)\\
&=
\frac1{k^{c(g)}}[z_1^0\cdots z_k^0]
\exp\!\left(\sum_{a=1}^kz_a\right)
\prod_{j=1}^s\sum_{a=1}^kz_a^{-b_j},
\end{aligned}
\]
which is \eqref{eq:Q-CT-prime}. Similarly, using multinomial coefficient extraction in \eqref{eq:Q-multinomial},
\[
Q(g,h)
=
k^{-c(g)}[z_1^0\cdots z_k^0]
\frac{(z_1+\cdots+z_k)^n}{n!}
\prod_{j=1}^s\sum_{a=1}^kz_a^{-b_j},
\]
which is \eqref{eq:Q-CT}.
\end{proof}

\begin{remark}\label{rem:coord-coeff-remarks}
\leavevmode
\begin{enumerate}[label=\textup{(\roman*)},itemsep=2pt]
\item If \(H=\langle g,h\rangle\) is transitive, then \(s=1\), \(b_1=n\), and
\[
Q(g,h)=\frac{k^{1-c(g)}}{n!}.
\]

\item If \(h=e\), then the joint orbits are the cycles of \(g\), and
\[
Q(g,e)=\E\left[\prod_{a=1}^k\frac1{M_a!}\right],
\]
where the expectation is over a uniform coloring of the cycles of \(g\).

\item The two coefficient forms are equivalent because \(\prod_{j=1}^s\sum_{a=1}^k z_a^{-b_j}\) is homogeneous of total degree \(-n\).
Equivalently, after the substitution \(z_a\mapsto z_a^{-1}\), we may write the product with positive powers:
\[
\prod_{j=1}^s\sum_{a=1}^k z_a^{b_j}.
\]
\end{enumerate}
\end{remark}

\begin{corollary}[Binary specialization]\label{cor:binary-forms}
For \(k=2\),
\[
Q(g,h)=2^{-c(g)}\sum_{J\subseteq[s]}\frac1{S_J!(n-S_J)!}
=\frac{2^{-c(g)}}{n!}\sum_{J\subseteq[s]}\binom n{S_J},
\]
where \(S_J:=\sum_{j\in J}b_j\). Equivalently,
\[
Q(g,h)=2^{s-c(g)}\E\left[\frac1{S!(n-S)!}\right]
=\frac{2^{-c(g)}}{n!}[w^n](1+w)^n\prod_{j=1}^s(1+w^{b_j}),
\]
where \(S=\sum_j b_j\xi_j\) and \(\xi_j\) are independent Bernoulli\((1/2)\).
\end{corollary}

\begin{proof}
This is Theorem~\ref{thm:Qgh-canonical} specialized to \(k=2\). A binary coloring is a subset \(J\subseteq[s]\), with color masses \(S_J\) and \(n-S_J\), giving the first two formulas. If \(J\) is uniform, then \(\xi_j:=\mathbf 1_{\{j\in J\}}\) are independent Bernoulli\((1/2)\) variables and \(S=\sum_jb_j\xi_j\), giving the expectation form. The coefficient form follows from \eqref{eq:Q-CT} by setting \(z_1=wz_2\) and extracting the constant term in \(z_2\).
\end{proof}

\begin{example}[Transitive source gives a flat row]\label{ex:ncycle-flat-row}
If \(g\) is an \(n\)-cycle, then \(H=\langle g,h\rangle\) is transitive for every \(h\in S_n\). By Remark~\ref{rem:coord-coeff-remarks}\textup{(i)},
\[
Q(g,h)=\frac{k^{1-c(g)}}{n!}=\frac1{n!},
\]
since \(c(g)=1\).
\end{example}

\begin{corollary}[Uniform pointwise floor]\label{cor:uniform-floor}
Assume \(k\ge2\). For all \(g,h\in S_n\),
\[
Q(g,h)\ge\frac{k^{1-c(g)}}{n!},
\]
with equality iff \(\langle g,h\rangle\) is transitive on \([n]\).
\end{corollary}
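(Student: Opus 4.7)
The plan is to derive the lower bound directly from the \emph{colorings sum} form \eqref{eq:Q-colorings} of Theorem~\ref{thm:Qgh-canonical}. Writing $H=\langle g,h\rangle$ and letting $O_1,\dots,O_s$ be the $H$–orbits of sizes $b_1,\dots,b_s$, every summand $\prod_{a=1}^k 1/M_a(\phi)!$ is strictly positive, so I can bound $Q(g,h)$ from below by keeping only a convenient subset of colorings. The natural choice is the $k$ \emph{constant} colorings $\phi\equiv a$ for $a=1,\dots,k$: each forces $M_a(\phi)=n$ and $M_{a'}(\phi)=0$ for $a'\ne a$, contributing exactly $1/n!$. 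Summing these $k$ contributions and dividing by $k^{c(g)}$ yields the claimed floor $k^{1-c(g)}/n!$.

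For the equality characterization, observe that all non-constant colorings give strictly positive contributions (since each $1/M_a(\phi)!$ is a positive real); hence equality forces the sum in \eqref{eq:Q-colorings} to consist of \emph{only} the $k$ constant colorings. This is possible exactly when there are no non-constant colorings $\phi:[s]\to[k]$, i.e., when $s=1$ (equivalently, when $H=\langle g,h\rangle$ is transitive on $[n]$; the case $k=1$ is vacuous). Conversely, if $H$ is transitive then $s=1$, $b_1=n$, the sum has exactly $k$ terms each equal to $1/n!$, and equality is attained; this also reproduces the reduction already recorded in Remark~\ref{rem:trans}(i). There is no real obstacle here: the whole argument is a one-line application of the colorings sum together with the positivity of its terms, so the proof should fit in a few lines.
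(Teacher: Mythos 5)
Your proposal is correct and is essentially the same as the paper's own proof: both start from the colorings-sum form \eqref{eq:Q-colorings}, isolate the $k$ constant colorings (each contributing $1/n!$) to get the floor, and characterize equality by noting that non-constant colorings exist precisely when $s\ge 2$, each adding a strictly positive term.
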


\begin{proof}
In \eqref{eq:Q-colorings}, each constant coloring \(\phi\equiv a\) has
\[
M_a(\phi)=n,\qquad M_b(\phi)=0\quad(b\ne a),
\]
so it contributes \(1/n!\). There are \(k\) constant colorings, hence
\[
Q(g,h)\ge \frac1{k^{c(g)}}\cdot \frac{k}{n!}
=\frac{k^{1-c(g)}}{n!}.
\]
Equality holds exactly when there are no nonconstant colorings, i.e. when \(s=1\), equivalently when \(\langle g,h\rangle\) is transitive.
\end{proof}

\begin{example}[Two joint orbits]\label{ex:two-orbits-kary}
Suppose \(\langle g,h\rangle\) has two orbits of sizes \(a\) and \(n-a\).
There are \(k\) colorings in which the two orbits receive the same color; each contributes \(1/n!\). There are \(k(k-1)\) colorings in which the two orbits receive distinct colors; each contributes \(1/(a!(n-a)!)\). Hence
\[
Q(g,h)
=
\frac1{k^{c(g)}}\left(\frac{k}{n!}+\frac{k(k-1)}{a!(n-a)!}\right)
=
\frac{k^{1-c(g)}}{n!}\left(1+(k-1)\binom na\right).
\]
\end{example}

\begin{theorem}[Identity to a single \(t\)-cycle]\label{thm:kary-IdToTcycle}
Assume \(k\ge2\). Let \(g=e\), and let \(h\in S_n\) be a single \(t\)-cycle, \(2\le t\le n\). Set \(m:=n-t\). For \(p\ge1\) and \(s\ge0\), define
\[
\kappa_p(s):=
\sum_{\substack{u_1+\cdots+u_p=s\\ u_i\ge0}}
\prod_{i=1}^p\frac1{(u_i!)^2}.
\]
Then
\begin{align}
Q(e,h)
&=
k^{1-n}\sum_{j=0}^{m}
\binom mj
\frac{(m-j)!}{(t+j)!}\,
\kappa_{k-1}(m-j), \label{eq:kary-mult-A}\\
&=
k^{1-n}\sum_{r=0}^{m}
\binom mr
\frac{r!}{(n-r)!}\,
\kappa_{k-1}(r). \label{eq:kary-mult-B}
\end{align}
For \(k=2\),
\begin{equation}\label{eq:binary-t-cycle}
Q(e,h)=\frac{2^{1-n}}{n!}\binom{2n-t}{n}.
\end{equation}
\end{theorem}

\begin{proof}
Since \(g=e\), we have \(c(g)=n\). The orbits of
\[
H=\langle e,h\rangle=\langle h\rangle
\]
are one block of size \(t\) and \(m=n-t\) singleton blocks. Write these orbit sizes as
\[
(b_0,b_1,\dots,b_m)=(t,1,\dots,1).
\]

\begin{figure}[H]
    \centering
    \includegraphics[width=.9\linewidth]{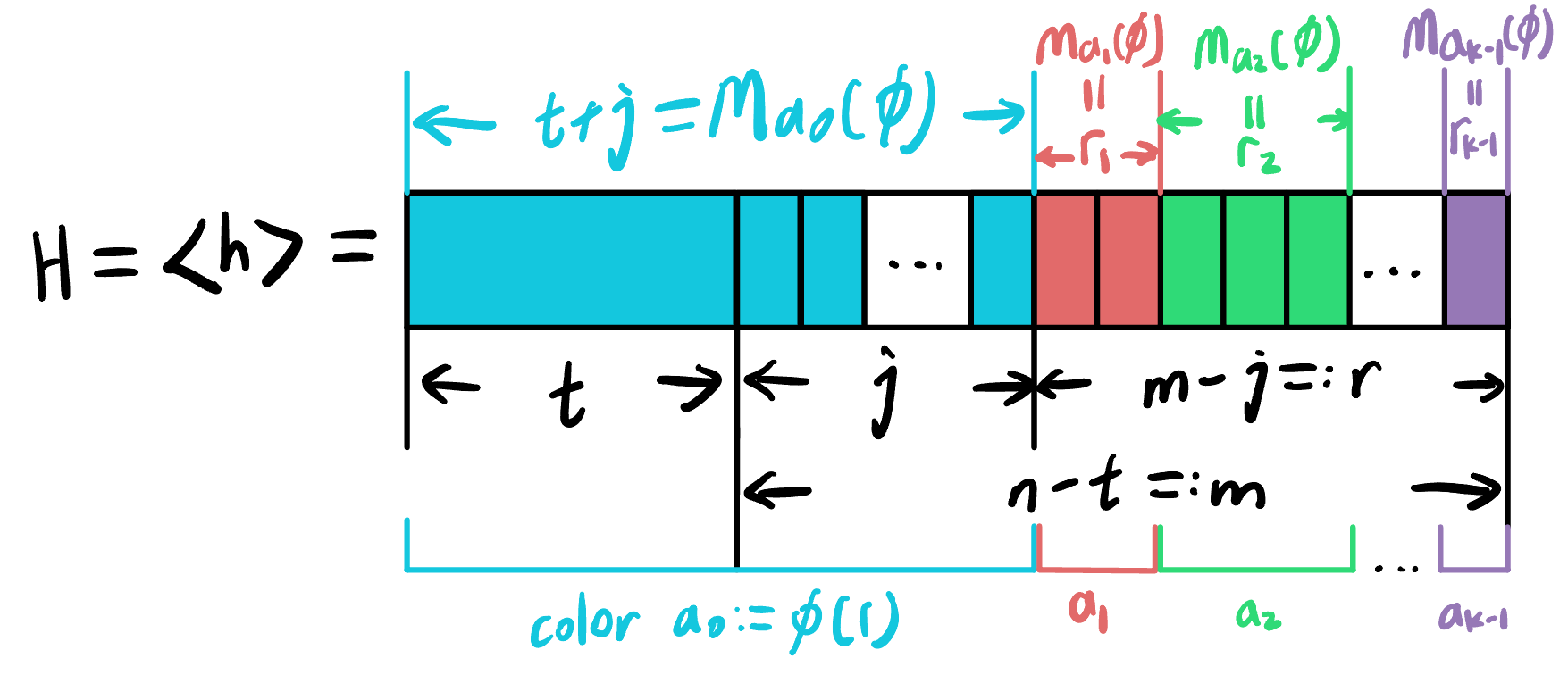}
    \caption{The orbits of \(\langle h\rangle\) and the color counts \(M_a(\phi)\).}
    \label{fig:tcycle-orbits}
\end{figure}

By \eqref{eq:Q-colorings},
\[
Q(e,h)=
\frac1{k^n}
\sum_{\phi:\{0,1,\dots,m\}\to[k]}
\prod_{a=1}^k\frac1{M_a(\phi)!}.
\]
For a coloring \(\phi\), let
\[
a_0:=\phi(0),
\qquad
j:=|\{i\in\{1,\dots,m\}:\phi(i)=a_0\}|,
\qquad
r:=m-j.
\]
Order the remaining colors increasingly:
\[
[k]\setminus\{a_0\}=\{a_1,\dots,a_{k-1}\}.
\]
Define
\[
r_\ell:=|\{i\in\{1,\dots,m\}:\phi(i)=a_\ell\}|,
\qquad \ell=1,\dots,k-1.
\]
Then
\[
r_1+\cdots+r_{k-1}=r.
\]
For such a coloring,
\[
M_{a_0}(\phi)=t+j,
\qquad
M_{a_\ell}(\phi)=r_\ell,
\]
and hence
\[
\prod_{a=1}^k\frac1{M_a(\phi)!}
=
\frac1{(t+j)!}\prod_{\ell=1}^{k-1}\frac1{r_\ell!}.
\]

Apply Lemma~\ref{lemma:reindexing} to
\[
\Phi:[k]^{\{0,1,\dots,m\}}
\longrightarrow
[k]\times\{0,\dots,m\}\times\mathbb N_0^{k-1},
\qquad
\Phi(\phi)=(a_0,j,r_1,\dots,r_{k-1}).
\]
For fixed \((a_0,j,r_1,\dots,r_{k-1})\), the fiber size is
\[
|\Phi^{-1}(a_0,j,r_1,\dots,r_{k-1})|
=
\binom mj\binom{r}{r_1,\dots,r_{k-1}}
=
\binom mj\frac{r!}{r_1!\cdots r_{k-1}!}.
\]
Therefore
\[
\begin{aligned}
\sum_{\phi}\prod_{a=1}^k\frac1{M_a(\phi)!}
&=
\sum_{a_0\in[k]}\sum_{j=0}^{m}
\binom mj\frac1{(t+j)!}
\sum_{\substack{r_1+\cdots+r_{k-1}=r\\ r_\ell\ge0}}
\frac{r!}{r_1!\cdots r_{k-1}!}
\prod_{\ell=1}^{k-1}\frac1{r_\ell!}\\
&=
k\sum_{j=0}^{m}
\binom mj\frac{r!}{(t+j)!}
\sum_{\substack{r_1+\cdots+r_{k-1}=r\\ r_\ell\ge0}}
\prod_{\ell=1}^{k-1}\frac1{(r_\ell!)^2}\\
&=
k\sum_{j=0}^{m}
\binom mj\frac{(m-j)!}{(t+j)!}\kappa_{k-1}(m-j).
\end{aligned}
\]
Multiplying by \(k^{-n}\) proves \eqref{eq:kary-mult-A}.

Since \(r=m-j\), we have \(t+j=t+m-r=n-r\) and \(\binom mj=\binom mr\), giving
\eqref{eq:kary-mult-B}.

For \(k=2\), \(\kappa_1(r)=1/(r!)^2\). Thus \eqref{eq:kary-mult-B} gives
\[
Q(e,h)=2^{1-n}\sum_{r=0}^{m}\binom mr\frac1{r!(n-r)!}
=
\frac{2^{1-n}}{n!}\sum_{r=0}^{m}\binom mr\binom nr.
\]
By Chu-Vandermonde,
\[
\sum_{r=0}^{m}\binom mr\binom nr
=
\binom{n+m}{m}
=
\binom{2n-t}{n-t}
=
\binom{2n-t}{n}.\qedhere
\]
\end{proof}

\begin{remark}[Two edge cases]\label{rem:tcycle-edge-cases}
If \(t=n\), then \(m=0\), so \eqref{eq:kary-mult-A} gives
\[
Q(e,h)=k^{1-n}\frac{1}{n!}.
\]
This matches the transitive case.

If \(t=n-1\), then \(m=1\), and
\[
\kappa_{k-1}(0)=1,\qquad \kappa_{k-1}(1)=k-1.
\]
Thus
\[
Q(e,h)
=
k^{1-n}\left(\frac{k-1}{(n-1)!}+\frac1{n!}\right).
\]
\end{remark}

\begin{example}[\(k=3,n=4,g=e,h=(123)\)]
Here \(H=\langle e,h\rangle=\langle(123)\rangle\) has orbits
\[
O_1=\{1,2,3\},\qquad O_2=\{4\},
\]
so \(s=2\), \(b_1=3\), \(b_2=1\). Formula \eqref{eq:kary-mult-A}, with \(t=3\), \(m=1\), \(\kappa_2(0)=1\), and \(\kappa_2(1)=2\), gives
\[
Q(e,h)
=
3^{-3}\left(\frac{2}{3!}+\frac1{4!}\right)
=
\frac1{72}.
\]
This agrees with Example~\ref{ex:two-orbits-kary}:
\[
\frac{3^{1-4}}{4!}\left(1+2\binom41\right)=\frac1{72}.
\]
\end{example}

\begin{lemma}[Reversibility ratio in the coordinate model]\label{lem:rev-ratio}
For \(S_n\curvearrowright[k]^n\),
\[
\frac{Q(g,h)}{Q(h,g)}=k^{c(h)-c(g)}
\qquad(g,h\in S_n).
\]
\end{lemma}

\begin{proof}
Use Corollary~\ref{cor:revratio} and \(|X_g|=k^{c(g)}\). Also \(Q(h,g)>0\), since the constant words lie in \(X_h\cap X_g\).
\end{proof}

\begin{corollary}[Single-cycle diagonal]\label{cor:kary-tcycle-correct}
Assume \(k\ge2\). If \(g\in S_n\) is a single \(t\)-cycle, \(2\le t\le n\), and \(m:=n-t\), then
\[
Q(g,g)=Q(g,e)=k^{t-1}Q(e,g).
\]
Consequently,
\begin{align}
Q(g,g)=Q(g,e)
&=
 k^{-m}\sum_{j=0}^{m}\binom mj
\frac{(m-j)!}{(t+j)!}\kappa_{k-1}(m-j),\label{eq:kary-Qge-A}\\
&=
 k^{-m}\sum_{r=0}^{m}\binom mr
\frac{r!}{(n-r)!}\kappa_{k-1}(r).\label{eq:kary-Qge-B}
\end{align}
In particular, if \(t=n\), then
\[
Q(e,g)=\frac{k^{1-n}}{n!},
\qquad
Q(g,g)=Q(g,e)=\frac1{n!}.
\]
If \(t=n-1\), then
\[
Q(e,g)=k^{1-n}\left(\frac{k-1}{(n-1)!}+\frac1{n!}\right),
\qquad
Q(g,g)=Q(g,e)=
\frac1k\left(\frac{k-1}{(n-1)!}+\frac1{n!}\right).
\]
For \(k=2\),
\[
Q(g,g)=Q(g,e)=\frac{2^{-m}}{n!}\binom{2n-t}{n}.
\]
\end{corollary}

\begin{proof}
By Remark~\ref{rem:Qgg}, \(Q(g,g)=Q(g,e)\). By Lemma~\ref{lem:rev-ratio},
\[
Q(g,e)=k^{c(e)-c(g)}Q(e,g).
\]
Since \(c(e)=n\) and a single \(t\)-cycle has \(c(g)=n-t+1\), we get
\[
Q(g,e)=k^{t-1}Q(e,g).
\]
The displayed formulas follow by multiplying Theorem~\ref{thm:kary-IdToTcycle} and \eqref{eq:binary-t-cycle} by \(k^{t-1}\).
\end{proof}

\subsection{Stationary Distribution and Ewens Law}\label{subsec:sta-cor}

\begin{theorem}[Dual stationary law in the coordinate model]\label{thm:kary-stationary2}
For \(S_n\curvearrowright[k]^n\),
\[
\pi_Q(g)=\frac{k^{c(g)}}{n!\binom{n+k-1}{k-1}}
=\frac{k^{c(g)}}{k(k+1)\cdots(k+n-1)}.
\]
Thus \(\pi_Q\) is the Ewens distribution on \(S_n\) with parameter \(k\).
\end{theorem}

\begin{proof}
Use Theorem~\ref{thm:universal-pi} and Lemma~\ref{lem:coord-basic}. Recall that the Ewens\((\theta)\) law is \(\theta^{c(g)}/(\theta(\theta+1)\cdots(\theta+n-1))\). Thus \(\pi_Q\) is the Ewens distribution on \(S_n\) with parameter \(\theta=k\).
\end{proof}

\begin{corollary}[Cycle-type stationary law]\label{cor:cycle-type-stationary}
Let \(\lambda\vdash n\), and let \(m_i\) be the number of parts of size \(i\) in \(\lambda\). Set
\[
\ell(\lambda):=\sum_{i\ge1}m_i.
\]
For the conjugacy-lumped dual chain,
\[
\bar\pi_Q(\lambda)
=
\frac{k^{\ell(\lambda)}}{\binom{n+k-1}{k-1}
\prod_{i\ge1}i^{m_i}m_i!}.
\]
\end{corollary}

\begin{proof}
The standard conjugacy-class formula in \(S_n\) gives
\[
|\mathcal C_\lambda|
=
\frac{n!}{\prod_{i\ge1}i^{m_i}m_i!}.
\]
For \(g\in\mathcal C_\lambda\), we have \(c(g)=\ell(\lambda)\), hence \(|X_g|=k^{\ell(\lambda)}\).
Therefore, by Corollary~\ref{cor:lump-Q-TV},
\[
\bar\pi_Q(\lambda)
=
|\mathcal C_\lambda|\frac{k^{\ell(\lambda)}}{n!\binom{n+k-1}{k-1}},
\]
which gives the formula.
\end{proof}

For the primal chain,
\[
\pi_K(x)=\frac{\prod_{a=1}^km_a(x)!}{n!\binom{n+k-1}{k-1}}
=
\frac1{\binom{n+k-1}{k-1}\binom n{\mathbf m(x)}},
\]
where
\[
\binom n{\mathbf m(x)}:=\frac{n!}{m_1(x)!\cdots m_k(x)!}.
\]
In the binary case,
\begin{equation}\label{eq:binary-stationary}
\pi_K(x)=\frac1{(n+1)\binom n{w(x)}},
\qquad
\pi_Q(g)=\frac{2^{c(g)}}{(n+1)!}.
\end{equation}

\begin{lemma}[Extrema of the stationary laws]\label{lem:cor-ext}
Write \(n=km+r\), with \(m\ge0\) and \(0\le r<k\). Then
\[
\pi_{K,\max}=\frac1{\binom{n+k-1}{k-1}},
\qquad
\pi_{K,\min}=
\frac{(m!)^{k-r}((m+1)!)^r}{n!\binom{n+k-1}{k-1}}.
\]
For \(Q\),
\[
\pi_{Q,\min}=\frac{k}{n!\binom{n+k-1}{k-1}},
\qquad
\pi_{Q,\max}=\frac{k^n}{n!\binom{n+k-1}{k-1}},
\qquad
\frac{\pi_{Q,\max}}{\pi_{Q,\min}}=k^{n-1}.
\]
In particular, for \(k=2\),
\[
\pi_{K,\max}=\frac1{n+1},
\qquad
\pi_{K,\min}=\frac1{(n+1)\binom n{\lfloor n/2\rfloor}},
\qquad
\pi_{Q,\min}=\frac2{(n+1)!},
\qquad
\pi_{Q,\max}=\frac{2^n}{(n+1)!}.
\]
\end{lemma}

\begin{proof}
Since
\[
\pi_K(x)=\frac1{\binom{n+k-1}{k-1}\binom n{\mathbf m(x)}},
\]
\(\pi_K\) is maximized when \(\binom n{\mathbf m}=1\), i.e. at \((n,0,\dots,0)\). It is minimized when \(\binom n{\mathbf m}\) is maximized.
If two parts satisfy \(a\ge b+2\), replacing them by \(a-1,b+1\) multiplies \(\binom n{\mathbf m}\) by \(a/(b+1)>1\). Hence the maximum occurs at the balanced histogram with \(r\) parts \(m+1\) and \(k-r\) parts \(m\), giving the displayed \(\pi_{K,\min}\). For \(Q\), use \(\pi_Q(g)\propto k^{c(g)}\), with \(1\le c(g)\le n\). The binary formulas are the case \(k=2\).
\end{proof}

\subsection{Mixing in the Coordinate Model}\label{subsec:be-mixing-Q}

The coordinate model has maximal stabilizer \(n!\), so the universal floor is weak but always available. The useful uniform estimate comes from Aldous's coupling bound for \(K\), while fixed-\(k\) transitive starts transfer from Diaconis's constant-start estimates.

\begin{theorem}[Coordinate-model mixing bounds]\label{thm:coordinate-mixing-summary}
For \(S_n\curvearrowright[k]^n\), \(k\ge2\), the following hold.

\begin{enumerate}[label=\textup{(\alph*)},itemsep=2pt]
\item For \(t\ge1\),
\[
d_Q(t),d_K(t)\le\left(1-\frac1{n!}\right)^t,
\]
and
\[
t_{\mix}(Q;\varepsilon),\ t_{\mix}(K;\varepsilon)
\le
\left\lceil n!\log\frac1\varepsilon\right\rceil .
\]

\item
\begin{equation}\label{eq:Q-aldous}
d_Q(t)\le n\left(1-\frac1k\right)^{t-1}
\qquad(t\ge1),
\end{equation}
and
\[
t_{\mix}(Q;\varepsilon)
\le
1+\left\lceil k\log\frac n\varepsilon\right\rceil .
\]

\item For fixed \(k\), there is \(c_k\in(0,1)\), independent of \(n\), such that
if \(g\) is an \(n\)-cycle, then
\[
d_Q(g,t)
\le (1-c_k)^{t-1}
\qquad(t\ge1).
\]
Hence
\[
t_{\mix}(Q;g,\varepsilon)
\le
1+\left\lceil c_k^{-1}\log\frac1\varepsilon\right\rceil .
\]
\item In the diagonal Bose--Einstein case \(k=n\), there is an absolute constant \(c_{\mathrm{BE}}>0\) such that, for any all-equal word \(x_0=a^n\),
\[
d_K(x_0,\ell)\ge c_{\mathrm{BE}}
\qquad(0\le \ell\le \lfloor\log n\rfloor).
\]
Hence
\[
d_K(\ell)\ge c_{\mathrm{BE}}\quad(0\le \ell\le \lfloor\log n\rfloor),
\qquad
d_Q(t)\ge c_{\mathrm{BE}}\quad(0\le t\le \lfloor\log n\rfloor-1),
\]
and
\[
t_{\mix}(K;c_{\mathrm{BE}}/2)\ge \lfloor\log n\rfloor+1,
\qquad
t_{\mix}(Q;c_{\mathrm{BE}}/2)\ge \lfloor\log n\rfloor .
\]
\end{enumerate}
\end{theorem}

\begin{proof}
By Lemma~\ref{lem:coord-basic},
\[
|G_x|=\prod_{a=1}^k m_a(x)!\le n!,
\]
with equality at constant words. Thus \(\max_x|G_x|=n!\), and \textup{(a)} follows from Theorem~\ref{cor:universal-mix}.

Part \textup{(b)} follows from Aldous's bound
\[
d_K(t)\le n\left(1-\frac1k\right)^t
\]
as stated in \cite[Theorem~2]{Diaconis05}, and Corollary~\ref{lem:TV-step}.

For \textup{(c)}, if \(g\) is an \(n\)-cycle, then
\[
X_g=\{a^n:a\in[k]\}.
\]
Apply Theorem~\ref{lem:ptwise-transfer} to Diaconis's fixed-\(k\) bound from all-equal starts \cite[Theorem~1]{Diaconis05}.

For \textup{(d)}, by Diaconis's \(k=n\) lower bound \cite[Section~4, Proposition]{Diaconis05}, there is an absolute constant \(c_{\mathrm{BE}}>0\), independent of \(n\), such that from an all-equal start \(x_0=a^n\),
\[
d_K(x_0,\ell)\ge c_{\mathrm{BE}}
\qquad
(0\le \ell\le \lfloor\log n\rfloor).
\]
Since \(G_{x_0}=S_n\), Theorem~\ref{lem:ptwise-transfer} gives, for \(t\ge0\),
\[
d_K(x_0,t+1)
\le
\max_{h\in G_{x_0}}d_Q(h,t)
=
d_Q(t).
\]
Thus, if \(0\le t\le \lfloor\log n\rfloor-1\), then
\[
d_Q(t)\ge d_K(x_0,t+1)\ge c_{\mathrm{BE}}.
\]
The mixing-time lower bounds follow immediately from the definitions.
\end{proof}

Assume \(n\ge2\). For \(S_n\curvearrowright\{0,1\}^n\), recall that Diaconis--Zhong~\cite{DiaconisZhong21} prove that from
\(x_0\in\{0^n,1^n\}\),
\begin{equation}\label{eq:DZ-symmetric}
\frac14\left(\frac14\right)^t
\le
d_K(x_0,t)
\le
4\left(\frac14\right)^t.
\end{equation}
Consequently, from \(x_0\in\{0^n,1^n\}\),
\begin{equation}\label{eq:binary-primal-start-mixing-window}
\left\lceil\log_4\frac1\varepsilon\right\rceil-1
\le
t_{\mix}(K;x_0,\varepsilon)
\le
1+\left\lceil\log_4\frac1\varepsilon\right\rceil .
\end{equation}
Thus
\[
t_{\mix}(K;x_0,\varepsilon)
=
\left\lceil\log_4\frac1\varepsilon\right\rceil+O(1),
\]
with constants independent of \(n\).

\begin{theorem}[Binary dual spectrum and bounds]
\label{thm:Q-exact-spectrum}
Assume \(n\ge2\) and consider \(S_n\curvearrowright\{0,1\}^n\). Set
\[
\lambda_m:=\left(\frac{\binom{2m}{m}}{2^{2m}}\right)^2
\qquad(0\le m\le\lfloor n/2\rfloor).
\]
Then:
\begin{enumerate}[label=\textup{(\alph*)},itemsep=2pt]
\item
\[
\Spec_{\ne0}(Q)=\{\lambda_m:0\le m\le\lfloor n/2\rfloor\},
\qquad
\operatorname{mult}_Q(\lambda_m)=\binom n{2m},
\]
and
\[
\operatorname{mult}_Q(0)=n!-2^{n-1}.
\]
Thus \(0\in\Spec(Q)\) iff \(n\ge3\).

\item For \(t\ge1\),
\begin{equation}\label{eq:bin-unif-TV-Q}
d_Q(t)
\le
\frac12\sqrt{(n+1)\binom n{\lfloor n/2\rfloor}}
\left(\frac14\right)^{t-1}.
\end{equation}

\item If \(g\) is an \(n\)-cycle, then, for \(t\ge1\),
\[
d_Q(g,t)\le 4\left(\frac14\right)^{t-1},
\qquad
t_{\mix}(Q;g,\varepsilon)
\le
2+\left\lceil\log_4\frac1\varepsilon\right\rceil .
\]

\item For \(t\ge0\),
\[
d_Q(t)\ge4^{-(t+2)},
\qquad
t_{\mix}(Q;\varepsilon)
\ge
\left\lceil\log_4\frac1\varepsilon\right\rceil-2.
\]

\item For \(t\ge1\),
\[
d_Q(e,t)
\le
\frac12\sqrt{\frac{n+1}{4^n}\binom{2n}{n}-1}
\left(\frac14\right)^{t-1}
\le
\frac12\left(\frac n\pi\right)^{1/4}
\left(\frac14\right)^{t-1}.
\]
\end{enumerate}
\end{theorem}

\begin{proof}
For \textup{(a)}, Theorem~\ref{thm:shared-spectrum} transfers the nonzero spectrum and multiplicities from the binary primal spectrum of Diaconis--Lin--Ram~\cite[Theorem~1.2]{DLR25}. Since
\[
\sum_{m=0}^{\lfloor n/2\rfloor}\binom n{2m}=2^{n-1},
\]
the remaining \(n!-2^{n-1}\) eigenvalues are zero.

For \textup{(b)}, use \(\lambda_1(K)=1/4\). By Lemma~\ref{lem:cor-ext},
\[
\pi_{K,\min}^{-1}=(n+1)\binom n{\lfloor n/2\rfloor}.
\]
Thus \eqref{eq:spectral-tv} gives
\[
d_K(t)
\le
\frac12\sqrt{(n+1)\binom n{\lfloor n/2\rfloor}}
\left(\frac14\right)^t.
\]
Then \(d_Q(t)\le d_K(t-1)\) gives the upper bound for \(Q\).

For \textup{(c)}, \(X_g=\{0^n,1^n\}\), so Theorem~\ref{lem:ptwise-transfer} and the upper bound in \eqref{eq:DZ-symmetric} give the upper bound; solving gives the mixing bound.

For \textup{(d)}, \(d_K(t+1)\le d_Q(t)\), and the lower bound in \eqref{eq:DZ-symmetric} gives
\[
d_Q(t)\ge d_K(t+1)\ge \frac14\left(\frac14\right)^{t+1}=4^{-(t+2)}.
\]

For \textup{(e)}, by \eqref{eq:Q-AK},
\[
Q^t(e,\cdot)=A_eK^{t-1}B,
\qquad
\pi_Q=\pi_KB.
\]
Here \(A_e=U\) is uniform on \(\{0,1\}^n\). Since \(B\) is stochastic, Proposition~\ref{prop:TV-contraction} and
Proposition~\ref{prop:L2-contraction} give
\[
d_Q(e,t)
\le
\|UK^{t-1}-\pi_K\|_{\TV}
\le
\frac12\left(\frac14\right)^{t-1}\sqrt{\chi^2(U\|\pi_K)}.
\]
Using \eqref{eq:binary-stationary},
\[
\chi^2(U\|\pi_K)
=
\sum_x\frac{U(x)^2}{\pi_K(x)}-1
=
\frac{n+1}{4^n}\sum_{w=0}^n\binom nw^2-1
=
\frac{n+1}{4^n}\binom{2n}{n}-1.
\]
Finally, by \(\binom{2n}{n}\le4^n/\sqrt{\pi n}\),
\[
\frac{n+1}{4^n}\binom{2n}{n}-1
\le
\frac{n+1}{\sqrt{\pi n}}-1
=
\sqrt{\frac n\pi}+\frac1{\sqrt{\pi n}}-1
\le
\sqrt{\frac n\pi},
\]
since \(n\ge2\). Taking square roots gives the final displayed bound.
\end{proof}

\begin{remark}[Direct \(Q\) spectral bound]
By Lemma~\ref{lem:cor-ext}, in the binary case
\[
\pi_{Q,\min}=\frac2{(n+1)!}.
\]
Thus \eqref{eq:spectral-tv} gives
\[
d_Q(t)\le
\frac12\sqrt{\frac{(n+1)!}{2}-1}\left(\frac14\right)^t
\le
\frac12\sqrt{\frac{(n+1)!}{2}}\left(\frac14\right)^t.
\]
At the same time \(t\), the transferred bound \eqref{eq:bin-unif-TV-Q} is sharper than the simplified direct bound
\[
\frac12\sqrt{\frac{(n+1)!}{2}}\left(\frac14\right)^t
\]
iff
\[
32\binom n{\lfloor n/2\rfloor}\le n!,
\]
which holds for all \(n\ge6\). For \(2\le n\le5\), the direct bound is no worse.
\end{remark}

\section{Conclusion}

We introduced and analyzed the dual Burnside process, establishing its fundamental properties and its connection to the classical Burnside chain. The two chains appear to be different processes on different state spaces, but the primal--dual factorization
\[
K=BA,
\qquad
Q=AB
\]
reveals that they share the same nonzero spectrum, the same relaxation time, and total-variation mixing times differing by at most one step. The stationary laws mirror each other:
\[
\pi_K(x)\propto |G_x|,
\qquad
\pi_Q(g)\propto |X_g|.
\]
Thus the dual perspective gives a new lens for studying classical Burnside processes and sharpening their mixing-time bounds through transfer between the primal and dual chains. It provides both theoretical insight and practical advantages for sampling under group symmetry, and opens new avenues for understanding and accelerating symmetry-aware Markov chain Monte Carlo.

The dual viewpoint is useful because it can compress the analysis. For \(k\ge2\), in the value-permutation model \(S_k\curvearrowright[k]^n\), the dual kernel has an exact fixed-symbol-set quotient of size \(2^k-k-1\), independent of \(n\), preserving the entire nonzero spectra of both chains. For fixed \(k\ge3\), its limiting eigenvalues are
\[
\theta_{k,r}=\frac{!\,(k-r)}{(k-r)!},
\qquad r\in\{1,\dots,k-2,k\},
\]
with multiplicity \(\binom kr\), and hence
\[
\lambda_1(K_n)=\lambda_1(Q_n)\to\frac12,
\qquad
 t_{\mathrm{rel}}(K_n)=t_{\mathrm{rel}}(Q_n)\to2.
\]
For \(S_3\curvearrowright[3]^n\), the dual conjugacy quotient has two states, while the orbit-lumped primal chain has \((3^n+3)/6\) states.

In the coordinate-permutation model \(S_n\curvearrowright[k]^n\), the transition \(Q(g,h)\) is controlled by the joint orbits of \(\langle g,h\rangle\). In the binary case, for \(n\ge2\), known primal spectral results imply a dual zero-eigenvalue multiplicity of \(n!-2^{n-1}\), positive exactly when \(n\ge3\), and the identity-start \(L^2\) bound avoids the factorial prefactor from direct worst-case spectral estimates on \(S_n\).

Uniform stabilizer-preserving covers give a useful criterion for when different Burnside problems share the same dual chain. The parking-function cover illustrates this criterion: it recovers the common-dual mechanism behind the Bose--Einstein comparison in \cite{FP26} and shows that the parking-function dual chain is identical to the dual chain for the coordinate-permutation model on \([n+1]^n\), while the primal chains differ by a fiberwise lift.

Future work includes sharper TV bounds from the exact fixed-symbol-set quotient, geometric methods such as Diaconis--Stroock~\cite{DiaconisStroock91}, conjugacy-lumped estimates in the coordinate model, and a systematic study of Burnside and dual Burnside chains across the twelvefold way. The value-permutation model \(S_k\curvearrowright[k]^n\) is the distinguishable-to-indistinguishable any-map case, whose orbit count is \(\sum_{r\le k}S(n,r)\). The coordinate-permutation model \(S_n\curvearrowright[k]^n\) is the indistinguishable-to-distinguishable any-map case, whose orbit count is \(\binom{n+k-1}{k-1}\).

Injective and surjective restrictions give natural \(G\)-invariant subactions when nonempty. If \(Y\subseteq X\) is \(G\)-invariant and nonempty, the restricted dual chain lives on
\[
G_Y^*:=\{g\in G:Y_g\ne\varnothing\},
\qquad
Y_g:=X_g\cap Y,
\]
and is obtained from the same formula with \(X_g\) replaced by \(Y_g\). These restricted kernels are generally not global rescalings of the unrestricted ones.

For example, in the value-permutation model \(S_k\curvearrowright[k]^n\), the injective restriction, nonempty when \(n\le k\), gives
\[
|Y_g|=(f(g))_n:=f(g)(f(g)-1)\cdots(f(g)-n+1),
\]
with \((a)_n=0\) if \(a<n\). The surjective restriction, nonempty when \(n\ge k\), gives
\[
|Y_e|=k!S(n,k),
\qquad
|Y_g|=0\quad(g\ne e).
\]
In the coordinate-permutation model \(S_n\curvearrowright[k]^n\), the surjective restriction, nonempty when \(n\ge k\), gives
\[
|Y_g|=k!S(c(g),k),
\]
with \(S(m,k)=0\) for \(m<k\), while the injective restriction, nonempty when \(n\le k\), gives
\[
|Y_e|=k(k-1)\cdots(k-n+1),
\qquad
|Y_g|=0\quad(g\ne e).
\]
Thus injective and surjective restrictions require separate dual-chain analyses.

\appendix

\section{Value-Permutation Model Examples}\label{app:value-examples}

\begin{example}[Value-permutation model: \(S_5\curvearrowright\{1,\dots,5\}^4\)]
For \(g=(12)\),
\[
\Fix(g)=\{3,4,5\},
\qquad
X_g=\{3,4,5\}^4,
\qquad
|X_g|=3^4.
\]
For \(x=(1,3,3,5)\), \(\supp(x)=\{1,3,5\}\), so
\[
G_x\cong S_{5-3}=S_2,
\qquad
|G_x|=2.
\]
The orbit of $x=(1,3,3,5)$ under $S_5$ corresponds to the partition of the positions by equal symbols, \(\{\{1\},\{2,3\},\{4\}\}\). Thus orbits are set partitions of \([n]\) into at most \(k\) blocks; here \(|[5]^4/S_5|=B_4=15\).
\end{example}

\begin{example}[Exact ternary dual chain]\label{ex:ternary-dual}
Let \(S_3\curvearrowright[3]^n\) act by permuting values. Write \(\tau_a\) for
the transposition fixing \(a\in[3]\). Then
\[
S_3^*=\{e,\tau_1,\tau_2,\tau_3\}.
\]
Set
\[
a_n:=\frac1{2\cdot3^{n-1}},
\qquad
\mu_n:=\frac12-a_n.
\]
Since
\[
X_e=[3]^n,\qquad X_{\tau_a}=\{a^n\},\qquad G_{a^n}=\{e,\tau_a\},
\]
we get, in the order \(e,\tau_1,\tau_2,\tau_3\),
\[
Q_n=
\begin{pmatrix}
1-a_n & a_n/3 & a_n/3 & a_n/3\\[3pt]
1/2 & 1/2 & 0 & 0\\[3pt]
1/2 & 0 & 1/2 & 0\\[3pt]
1/2 & 0 & 0 & 1/2
\end{pmatrix}.
\]
Indeed,
\[
Q_n(\tau_a,e)=Q_n(\tau_a,\tau_a)=\frac12,
\qquad
Q_n(e,\tau_a)=3^{-n}\cdot\frac12=\frac{a_n}{3}.
\]

The stationary law is
\[
\pi_Q(e)=\frac{3^n}{3^n+3},
\qquad
\pi_Q(\tau_a)=\frac1{3^n+3},
\]
since \(\pi_Q(g)\propto f(g)^n\), with weights \(3^n,1,1,1\).

Let
\[
C_3:=\{e\},
\qquad
C_1:=\{\tau_1,\tau_2,\tau_3\}.
\]
The conjugacy quotient is
\[
\bar Q_n=
\begin{pmatrix}
1-a_n&a_n\\[3pt]
1/2&1/2
\end{pmatrix}.
\]
For the spectrum, decompose
\[
\mathbb R^4
=
\{(x,y,y,y)\}
\oplus
\{(0,u_1,u_2,u_3):u_1+u_2+u_3=0\}.
\]
On the second summand \(Q_n\) acts by \(1/2\); on the first summand it acts by \(\bar Q_n\), whose eigenvalues are \(1\) and \(\mu_n\). Hence the eigenvalues of \(Q_n\), counted with algebraic multiplicity, are
\[
1,\qquad \frac12,\qquad \frac12,\qquad \mu_n,
\]
and
\[
\Spec(\bar Q_n)=\{1,\mu_n\},
\qquad
0\le\mu_n<\frac12.
\]

Since \(Q_n(e,\cdot)\) is class-constant, conjugacy lumping preserves TV from \(e\). Also
\[
\delta_{C_3}-\bar\pi=\bar\pi(C_1)(1,-1),
\qquad
(1,-1)\bar Q_n=\mu_n(1,-1),
\qquad
\bar\pi(C_1)=\frac3{3^n+3}.
\]
Therefore, for every \(t\ge1\),
\[
d_{Q_n}(e,t)
=
d_{\bar Q_n}(C_3,t)
=
\frac3{3^n+3}\mu_n^t.
\]

The legs are
\[
A(e,w)=3^{-n},
\qquad
A(\tau_a,w)=\mathbf 1_{\{w=a^n\}},
\]
and
\[
B(w,e)=
\begin{cases}
1/2,&|\supp(w)|=1,\\
1,&|\supp(w)|\ge2,
\end{cases}
\qquad
B(w,\tau_a)=\frac12\mathbf 1_{\{w=a^n\}}.
\]
Thus \(Q_n=AB\).

By Theorem~\ref{thm:shared-spectrum}, the classical Burnside kernel \(K_n\) has the same nonzero eigenvalues as \(Q_n\). Hence
\[
\lambda_1(K_n)=\lambda_1(Q_n)=\frac12,
\qquad
t_{\mathrm{rel}}(K_n)=t_{\mathrm{rel}}(Q_n)=2.
\]
Finally,
\[
|[3]^n/S_3|
=
\sum_{r=0}^3S(n,r)
=
\frac{3^n+3}{6},
\]
whereas \(Q_n\) has \(4\) states and \(\bar Q_n\) has \(2\). For \(n\ge3\), the conjugacy quotient is strictly smaller than the primal orbit chain.
\end{example}

\begin{example}[Complete matrix decomposition for $k=3,\ n=2$]\label{ex:label2}
\emph{States.} 
\[
G^*=\{e,(12),(13),(23)\},\qquad
X=[3]^2=\{11,12,13,21,22,23,31,32,33\}.
\]
(These orders are used throughout for rows/columns of $Q$, $K$, and for $A$, $B$.)

\emph{Forward leg $A: G^*\to X$ (\(4\times 9\)) and Backward leg $B: X\to G^*$ (\(9\times 4\))}
\[
A=\frac{1}{9}\begin{pmatrix}
1 & 1 & 1 & 1 & 1 & 1 & 1 & 1 & 1 \\[2pt]
0 & 0 & 0 & 0 & 0 & 0 & 0 & 0 & 9 \\[2pt]
0 & 0 & 0 & 0 & 9 & 0 & 0 & 0 & 0 \\[2pt]
9 & 0 & 0 & 0 & 0 & 0 & 0 & 0 & 0
\end{pmatrix}
\qquad
B=\frac{1}{2}\begin{pmatrix}
1 & 0 & 0 & 1 \\[2pt]
2 & 0 & 0 & 0 \\[2pt]
2 & 0 & 0 & 0 \\[2pt]
2 & 0 & 0 & 0 \\[2pt]
1 & 0 & 1 & 0 \\[2pt]
2 & 0 & 0 & 0 \\[2pt]
2 & 0 & 0 & 0 \\[2pt]
2 & 0 & 0 & 0 \\[2pt]
1 & 1 & 0 & 0
\end{pmatrix}
\]

\emph{Dual kernel $Q=AB$ (\(4\times 4\)) and Primal kernel $K=BA$ (\(9\times 9\))}
\[
Q=\frac{1}{18}\begin{pmatrix}
15 & 1 & 1 & 1 \\[3pt]
9 & 9 & 0 & 0 \\[3pt]
9 & 0 & 9 & 0 \\[3pt]
9 & 0 & 0 & 9
\end{pmatrix}
\qquad
K=\frac{1}{18}\begin{pmatrix}
10 & 1 & 1 & 1 & 1 & 1 & 1 & 1 & 1 \\[2pt]
2 & 2 & 2 & 2 & 2 & 2 & 2 & 2 & 2 \\[2pt]
2 & 2 & 2 & 2 & 2 & 2 & 2 & 2 & 2 \\[2pt]
2 & 2 & 2 & 2 & 2 & 2 & 2 & 2 & 2 \\[2pt]
1 & 1 & 1 & 1 & 10 & 1 & 1 & 1 & 1 \\[2pt]
2 & 2 & 2 & 2 & 2 & 2 & 2 & 2 & 2 \\[2pt]
2 & 2 & 2 & 2 & 2 & 2 & 2 & 2 & 2 \\[2pt]
2 & 2 & 2 & 2 & 2 & 2 & 2 & 2 & 2 \\[2pt]
1 & 1 & 1 & 1 & 1 & 1 & 1 & 1 & 10
\end{pmatrix}
\]

\emph{Spectra.} \(\mathrm{Spec}(Q)=\{1,\tfrac12,\tfrac12,\tfrac13\}\), \quad 
\(\mathrm{Spec}(K)=\{1,\tfrac12,\tfrac12,\tfrac13,0,0,0,0,0\}\).
The nonzero spectra of \(Q\) and \(K\) coincide, as predicted by the factorization \(Q=AB,\ K=BA\).
\end{example}

\section{Coordinate-Permutation Model Examples}\label{app:coordinate-examples}

\begin{example}[Coordinate-permutation model: \(S_4\curvearrowright\{1,2,3\}^4\)]
For \(g=(12)(34)\), a fixed word has the form \((a,a,b,b)\), so
\[
|X_g|=3^{c(g)}=3^2.
\]
For \(x=(2,2,1,3)\), the histogram is \((1,2,1)\), hence
\[
G_x\cong S_1\times S_2\times S_1,
\qquad
|G_x|=1!2!1!=2.
\]
Orbits are histograms, equivalently weak \(3\)-compositions of \(4\), so
\[
|[3]^4/S_4|=\binom{4+3-1}{3-1}=15.
\]
\end{example}

\begin{example}[Complete matrix decomposition for $k=2,\ n=3$ ($S_3$ acting on $\{0,1\}^3$)]
\emph{States.}
\[
G^* = S_3 = \{e,(12),(13),(23),(123),(132)\},\qquad
X=\{0,1\}^3=\{000,001,010,011,100,101,110,111\}.
\]
(These orders are used throughout for rows/columns of $Q$, $K$, and for $A$, $B$.)

\emph{Forward leg $A:G^*\to X$ (\(6\times 8\)) and Backward leg $B:X\to G^*$ (\(8\times 6\))}
\[
A=\frac{1}{8}\begin{pmatrix}
1 & 1 & 1 & 1 & 1 & 1 & 1 & 1 \\[2pt]
2 & 2 & 0 & 0 & 0 & 0 & 2 & 2 \\[2pt]
2 & 0 & 2 & 0 & 0 & 2 & 0 & 2 \\[2pt]
2 & 0 & 0 & 2 & 2 & 0 & 0 & 2 \\[2pt]
4 & 0 & 0 & 0 & 0 & 0 & 0 & 4 \\[2pt]
4 & 0 & 0 & 0 & 0 & 0 & 0 & 4
\end{pmatrix}
\qquad
B=\frac{1}{6}\begin{pmatrix}
1 & 1 & 1 & 1 & 1 & 1 \\[2pt]
3 & 3 & 0 & 0 & 0 & 0 \\[2pt]
3 & 0 & 3 & 0 & 0 & 0 \\[2pt]
3 & 0 & 0 & 3 & 0 & 0 \\[2pt]
3 & 0 & 0 & 3 & 0 & 0 \\[2pt]
3 & 0 & 3 & 0 & 0 & 0 \\[2pt]
3 & 3 & 0 & 0 & 0 & 0 \\[2pt]
1 & 1 & 1 & 1 & 1 & 1
\end{pmatrix}
\]

\emph{Dual kernel $Q=AB$ (\(6\times 6\)) and Primal kernel $K=BA$ (\(8\times 8\))}
\[
Q=\frac{1}{24}\begin{pmatrix}
10 & 4 & 4 & 4 & 1 & 1 \\[3pt]
8 & 8 & 2 & 2 & 2 & 2 \\[3pt]
8 & 2 & 8 & 2 & 2 & 2 \\[3pt]
8 & 2 & 2 & 8 & 2 & 2 \\[3pt]
4 & 4 & 4 & 4 & 4 & 4 \\[3pt]
4 & 4 & 4 & 4 & 4 & 4
\end{pmatrix}
\qquad
K=\frac{1}{16}\begin{pmatrix}
5 & 1 & 1 & 1 & 1 & 1 & 1 & 5 \\[2pt]
3 & 3 & 1 & 1 & 1 & 1 & 3 & 3 \\[2pt]
3 & 1 & 3 & 1 & 1 & 3 & 1 & 3 \\[2pt]
3 & 1 & 1 & 3 & 3 & 1 & 1 & 3 \\[2pt]
3 & 1 & 1 & 3 & 3 & 1 & 1 & 3 \\[2pt]
3 & 1 & 3 & 1 & 1 & 3 & 1 & 3 \\[2pt]
3 & 3 & 1 & 1 & 1 & 1 & 3 & 3 \\[2pt]
5 & 1 & 1 & 1 & 1 & 1 & 1 & 5
\end{pmatrix}
\]

\emph{Spectra.} \(\mathrm{Spec}(Q)=\{1,\tfrac{1}{4},\tfrac{1}{4},\tfrac{1}{4},0,0\}\),\quad
\(\mathrm{Spec}(K)=\{1,\tfrac{1}{4},\tfrac{1}{4},\tfrac{1}{4},0,0,0,0\}\).
The triple eigenvalue \(\tfrac{1}{4}\) matches the Diaconis--Lin--Ram~\cite{DLR25} description.
\end{example}

\end{document}